\renewcommand{\d}{\partial}
\newcommand{\eps}{\epsilon}
\newcommand{\veps}{\varepsilon}
\newcommand{\vphi}{\varphi}
\newcommand{\al}{\alpha}
\newcommand{\ga}{\gamma}
\newcommand{\de}{\delta}
\newcommand{\la}{\lambda}
\newcommand{\om}{\omega}
\newcommand{\vrh}{\varrho}
\newcommand{\ka}{\kappa}
\newcommand{\Ga}{\Gamma}
\newcommand{\Om}{\Omega}
\newcommand{\cE}{\mathcal{E}}
\newcommand{\cF}{\mathcal{F}}
\newcommand{\cH}{\mathcal{H}}
\newcommand{\cS}{\mathcal{S}}
\newcommand{\supp}{\mbox{supp }}
\newcommand{\bR}{\mathbb{R}}
\newcommand{\bC}{\mathbb{C}}
\newcommand{\vpi}{\varpi}
\newtheorem{thm}{Theorem}
\newtheorem{prop}[thm]{Proposition}
\newtheorem{lem}[thm]{Lemma}
\newtheorem{cor}[thm]{Corollary}
\theoremstyle{definition}
\newtheorem{defn}[thm]{Definition}
\newtheorem{remark}[thm]{Remark}
\newtheorem{expl}[thm]{Example}
\numberwithin{thm}{section}
\numberwithin{equation}{section}
\renewcommand{\[}{\begin{equation}}
\renewcommand{\]}{\end{equation}}
\newcommand{\wed}{\wedge}
\title[Monge-Amp\`ere equations for measures dominated by capacity]{Continuous solutions to Monge-Amp\`ere equations on Hermitian manifolds for measures dominated by capacity}
\author{S\l awomir Ko\l odziej and Ngoc Cuong Nguyen} 
\address{Faculty of Mathematics and Computer Science, Jagiellonian University, \L ojasiewicza 6, 30-348 Krak\'ow, Poland}
\email{slawomir.kolodziej@im.uj.edu.pl}
\address{Department of Mathematical Sciences, KAIST, 291 Daehak-ro, Yuseong-gu, Daejeon 34141, South Korea}
\email{cuongnn@kaist.ac.kr}
\begin{document}
	\maketitle
	\begin{abstract} We prove the existence of a continuous quasi-plurisubharmonic solution to the Monge-Amp\`ere equation on a compact Hermitian manifold for a very general  measre on the right hand side. We admit  measures dominated by capacity in a certain manner, in particular, moderate measures studied by Dinh-Nguyen-Sibony.  As a consequence, we give a characterization of measures admitting H\"older continuous quasi-plurisubharmonic potential, inspired by the work of Dinh-Nguyen.
	\end{abstract}
	
\bigskip
\bigskip
	
\section{introduction}


\bigskip

Let $(X,\om)$ be a compact Hermitian manifold of dimension $n$. The study of the  complex Monge-Amp\`ere equation in this setting was initiated by Cherrier \cite{cherrier87},
and  the counterpart of the Calabi-Yau theorem \cite{Y} on compact Hermitian manifolds was  proven by Tosatti and Weinkove \cite{TW10b}.
Later Dinew and the authors, in a series of papers \cite{DK12}, \cite{KN1,KN4,KN2},  obtained weak continuous  solutions for more general densities on the right hand side of the equation, by extending the pluripotential methods employed before on the  K\"ahler manifolds. In this paper we deal with yet more general measures on the right hand side.

 If $\om$ is K\"ahler, then the first named author obtained in \cite{K98,K03} the unique continuous $\om$-plurisubharmonic ($\om$-psh for short) solution to the complex Monge-Amp\`ere equation with the right hand side being a measure in one of the  classes $\cF(X, h)$ satisfying a bound in terms of the Bedford-Taylor capacity and a weight function $h$ (the precise definition is given in the next section). 
We prove here the generalization of this result to Hermitian manifolds.

\begin{thm}\label{thm:intro-A} Let $\mu \in \cF(X,h)$ be such that  $\mu(X)>0$. Then, there exists a continuous $\om$-psh  function $u$ and a constant $c>0$ solving the equation
	\[\notag
	(\om + dd^c u)^n = c\; \mu.
	\]
\end{thm}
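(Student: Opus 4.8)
The plan is to follow the strategy of \cite{K98,K03}: solve regularized equations with smooth right-hand sides via the Hermitian Calabi--Yau theorem, derive uniform a priori bounds powered by the capacity domination $\mu\in\cF(X,h)$, and then pass to the limit, using a stability estimate to guarantee that the limiting potential is \emph{continuous}. First I would approximate $\mu$ by $\mu_j=f_j\,\om^n$ with $0\le f_j\in C^\infty(X)$, normalized so that $\mu_j(X)=\mu(X)$ and $\mu_j\to\mu$ weakly. The essential point is to arrange this so that $\{\mu_j\}_j\subset\cF(X,\tilde h)$ for a \emph{single} admissible weight $\tilde h$: replacing $h$ by a smaller weight still satisfying the Dini-type condition $\int_1^{\infty}\big(x\,\tilde h(x)^{1/n}\big)^{-1}dx<\infty$ is harmless, and a regularization by convolution in coordinate charts changes the capacity of a Borel set only up to a fixed multiplicative constant, so the defining inequality of $\cF(X,h)$ survives after shrinking the weight.

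By the Hermitian Calabi--Yau theorem of Tosatti--Weinkove \cite{TW10b}, for each $j$ there are a unique constant $c_j>0$ and a smooth $\om$-psh function $u_j$, normalized by $\sup_X u_j=0$, with $(\om+dd^cu_j)^n=c_j\,\mu_j$. Two uniform estimates are then needed. \textbf{(a)} The constants satisfy $0<C^{-1}\le c_j\le C$: indeed $c_j=\mu(X)^{-1}\int_X(\om+dd^cu_j)^n$, and on a compact Hermitian manifold the total Monge--Amp\`ere mass of a normalized $\om$-psh function is uniformly bounded above, while the lower bound uses $\mu(X)>0$ together with the Hermitian mass estimates of \cite{DK12,KN1} (and the oscillation bound in (b)), a mild bootstrap. \textbf{(b)} A uniform bound $\|u_j\|_{L^\infty}=\operatorname{osc}_X u_j\le C$ with $C=C(X,\om,h,\mu(X))$. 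This is the core estimate: combining the comparison principle for the Hermitian complex Monge--Amp\`ere operator (available up to a controllable error term, as in \cite{KN1,KN4,KN2}) with Ko\l odziej's iteration applied to $t\mapsto\operatorname{cap}_\om\big(\{u_j<-t\}\big)$, the membership $c_j\mu_j\in\cF(X,\tilde h)$ forces the iteration to converge and produces a bound independent of $j$.

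After passing to a subsequence, $c_j\to c>0$ and $u_j\to u$ in $L^1(X,\om^n)$ for a bounded $\om$-psh $u$. A Hartogs-type lemma upgrades this to convergence in capacity, whence the Bedford--Taylor continuity of the Monge--Amp\`ere operator along uniformly bounded sequences converging in capacity yields $(\om+dd^cu)^n=c\,\mu$ in the weak sense. To see that $u$ is \emph{continuous}, one invokes the stability estimate for the Hermitian Monge--Amp\`ere equation with right-hand side in $\cF(X,\tilde h)$: it bounds $\|u_j-u_k\|_{L^\infty}$ by a quantity measuring the closeness of $\mu_j$ and $\mu_k$ through the common weight $\tilde h$, which tends to $0$ as $j,k\to\infty$. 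Hence $(u_j)$ is Cauchy in $C^0(X)$, so its limit $u$ is continuous and solves the equation.

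\textbf{Main obstacle.} The two delicate points are, first, arranging the regularization so that the capacity domination holds \emph{uniformly} for one admissible weight; and second --- the real difficulty --- the stability and comparison machinery of the last two steps in the Hermitian setting, where $\int_X(\om+dd^cu)^n$ is not a cohomological invariant, so the comparison principle and the mass identities are only available modulo error terms that must be shown to be negligible. Carrying this out for the full class $\cF(X,h)$, rather than for bounded or $L^p$ densities as before, is the heart of the matter.
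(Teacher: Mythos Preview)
Your overall scheme---approximate, solve, obtain uniform estimates, pass to the limit via a stability estimate---is the paper's scheme, and several of your steps line up: the approximation preserving membership in a single $\cF(X,\tilde h)$ is Lemma~\ref{lem:app-seq}; the $L^\infty$ bound in (b) is \cite[Corollary~5.6]{KN1} once $c_j$ is bounded above; and the passage to a continuous limit is carried out via Proposition~\ref{prop:l1-stability} after checking $\int_X|u_\veps-u|\,d\mu_\veps\to0$.

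The genuine gap is in step~(a), exactly at the place the paper singles out as ``the main difficulty'': the \emph{upper} bound on $c_j$. Your argument rests on the claim that $\int_X\om_{u_j}^n$ is uniformly bounded above for $\om$-psh functions normalized by $\sup_X u_j=0$. On a K\"ahler manifold this is cohomological and free; on a general Hermitian manifold it is not. The known mass bounds (as in \cite{DK12,N16}) depend on $\|u_j\|_\infty$: integrating by parts in $\int_X\om^{n-k}\wedge(dd^cu)^k$ produces, for $k\ge2$, terms of the type $\int_X u\,dd^c\om\wedge\om_u\wedge\cdots$ and ultimately the gradient energy $\int_X du\wedge d^cu\wedge\om^{n-2}$, which is not controlled by $\|u\|_{L^1}$ alone. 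Since (b) needs $c_j$ bounded above to place $c_j\mu_j$ in a fixed $\cF(X,\tilde h)$, your ``mild bootstrap'' is genuinely circular and does not close. The paper breaks this circle by a different, local device: on a coordinate ball where $\mu_\veps$ keeps positive mass, solve the Dirichlet problem $(dd^cv_\veps)^n=\mu_U*\rho_\veps+\veps\om^n$ with zero boundary values (so $\|v_\veps\|_\infty$ is bounded via \cite{Ko96} using only $\mu_\veps\in\cF$), apply the mixed-form inequality to get $\om_{u_\veps}\wedge(dd^cv_\veps)^{n-1}\ge c_\veps^{1/n}R_\veps\,\om^n$, and bound $\int_{\Om'}\om_{u_\veps}\wedge(dd^cv_\veps)^{n-1}$ from above by Demailly's Chern--Levine--Nirenberg inequality in terms of $\|u_\veps\|_{L^1}$ only. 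Because $\|u_\veps\|_{L^1}$ is uniformly bounded for normalized $\om$-psh functions, this yields $c_\veps\le C$ without ever touching $\|u_\veps\|_\infty$.
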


If we assume further that the right hand side is strictly positive and absolutely continuous with respect to the Lebesgue measure, then we prove a stability of solutions  and their uniqueness extending the main theorem of \cite{KN2}, (Theorem~\ref{thm:x-stability}).
Our method is  adaptable to  the Monge-Amp\`ere type equations  \cite{N16}. As a consequence, we get the existence and uniqueness of continuous $\om$-psh solutions of these equations  (Corollary~\ref{cor:max}).

The families of  measures which belong to $\cF(X, h)$, for some $h$, include those having densities in $L^p , p>1,$ or even broader Orlicz spaces, but also measures singular with respect to $\om ^n$,
for instance smooth forms on  totally real submanifolds (see  e.g. \cite{K98}, \cite{BJZ},  \cite{viet16}). We shall distinguish
 classes $\cH(\tau)$ which are unions (over $C>0$) of $\cF(X,h_1)$  with $h_1(x) = C x^{n\tau}$ and fixed  $\tau>0$; and $\cF(X, h_2)$ with $h_2(x)= C e^{\al x}$ for some $C, \al>0$. The latter was introduced by
Dinh, Nguyen and Sibony \cite{DNS}, who called the measures in this class (the union over $C>0, \al >0$) {\em moderate}.
They   proved that any measure locally dominated by the Monge-Amp\`ere measure of a H\"older continuous psh function is moderate. 

Later, Dinh and Nguyen \cite{DN14} characterized the measures locally dominated by the Monge-Amp\`ere measure of a H\"older continuous psh function 
via the associated functionals acting on $PSH(\om)$ (the set  of all $\om$-psh functions) when $\om$ is K\"ahler. 
In the last section we  give a similar description in the  Hermitian setting. Let us define
$$
	\cS:= \left\{ v\in PSH(\om): -1\leq v\leq 0, \;\sup_X v=0\right\}.
$$
 Let $\mu$ be a positive Radon measure on $X$ and  $\hat{\mu }: PSH(\om) \to \bR$ the  associated functional  given by
 $$
 	\hat{\mu }(v)= \int_X v d\mu.
 $$

\begin{thm}\label{thm:intro-B} The measure $\mu$ belongs to $\cH(\tau)$ and $\hat{\mu }$ is H\"older continuous with respect to $L^1$-distance on $\cS$ if and only if there exists a H\"older continuous $\om$-psh function $u$ and a constant $c>0$ solving $
	(\om + dd^c u)^n = c \;\mu.$
\end{thm}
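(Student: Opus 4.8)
The plan is to prove the two implications separately, in both cases transplanting the K\"ahler arguments of Dinh--Nguyen--Sibony \cite{DNS} and Dinh--Nguyen \cite{DN14} while keeping track of the error terms produced by $d\om\neq0$; the only genuinely new ingredient is Theorem~\ref{thm:intro-A}, which enters the harder (necessity) direction.

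For the implication ``potential exists $\Rightarrow$ conditions'', suppose $(\om+dd^cu)^n=c\,\mu$ with $u$ an $\al$-H\"older $\om$-psh function. Covering $X$ by finitely many coordinate balls on which $\om=dd^cg$ with $g$ smooth, each restriction $u+g$ is a H\"older psh function, so I would invoke the local capacity estimate of \cite{DNS} (equivalently, combine a volume--capacity comparison with the H\"older modulus of $u$) to get $\mu(K)\le C\,\mathrm{cap}(K)^{n\tau}$ for all Borel $K$ and some $\tau>0$, using that the global capacity controls a fixed multiple of the local one on each chart; this is precisely the bound defining $\cH(\tau)$. For the functional, I would expand via the telescoping identity $(\om+dd^cu)^n=\om^n+\sum_{k=0}^{n-1}dd^cu\wed(\om+dd^cu)^k\wed\om^{n-1-k}$, so that for $v_0,v_1\in\cS$
\[\notag
\hat\mu(v_0)-\hat\mu(v_1)=c^{-1}\!\int_X(v_0-v_1)\,\om^n+c^{-1}\sum_{k}\int_X(v_0-v_1)\,dd^cu\wed(\om+dd^cu)^k\wed\om^{n-1-k}.
\]
The first term is $\le C\|v_0-v_1\|_{L^1}$. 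In each of the remaining terms I would move $dd^c$ off $v_0-v_1$ onto $u$: the bulk term $\int u\,dd^c(v_0-v_1)\wed S$, with $S$ a positive current of bounded mass, is controlled by the Dinh--Sibony interpolation inequality in terms of $\|u\|_{C^{0,\al}}$ and $\|v_0-v_1\|_{L^1}$ (approximate $u$ by $u_\eps$ smooth with $\|u-u_\eps\|_\infty\le C\eps^\al$, $|dd^cu_\eps|\le C\eps^{\al-2}$, and optimize in $\eps$), giving a bound $C\|v_0-v_1\|_{L^1}^{\be}$ with $\be=\be(\al,n)>0$; the commutator terms carrying $d\om$ and $dd^c\om$ have smooth coefficients and contribute $O(\|v_0-v_1\|_{L^1})$. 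Hence $\hat\mu$ is H\"older for the $L^1$-distance on $\cS$.

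For the converse, assume $\mu\in\cH(\tau)$ and $|\hat\mu(v_0)-\hat\mu(v_1)|\le C\|v_0-v_1\|_{L^1}^{\ga}$ on $\cS$. Since $\mu\in\cF(X,h_1)$ with $h_1(x)=Cx^{n\tau}$, Theorem~\ref{thm:intro-A} yields a \emph{continuous} $\om$-psh $u$ and $c>0$ with $(\om+dd^cu)^n=c\,\mu$; normalize $\sup_Xu=0$. I would then take the standard regularization $u_\de\searrow u$ (sup-convolution along the $\om$-exponential map): $u_\de$ is $\om_\de$-psh with $\om_\de\le(1+C\de)\om$, it is $C\de^{-1}$-Lipschitz, and $\int_X(u_\de-u)\,\om^n\le C\de$. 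Writing $u_\de-u=(u_\de-\psi_\de)+(\psi_\de-u)$ with $\psi_\de:=(1+C\de)^{-1}u_\de\in PSH(\om)$ (so $\|u_\de-\psi_\de\|_\infty\le C\de$) and applying the H\"older hypothesis to the normalized functions built from $u$ and $\psi_\de$, I get $\int_X(u_\de-u)\,d\mu\le C\de^{\ga}$ after shrinking $\ga$. Since $\{u<u_\de-t\}$ forces $u_\de-u>t$ there, this gives $\mu(\{u<u_\de-t\})\le C\de^{\ga}t^{-1}$. Feeding this into the comparison-principle/De~Giorgi iteration of \cite{K98,K03} in its Hermitian form \cite{KN1,KN2,KN4}, and using the polynomial capacity bound from $\mu\in\cH(\tau)$ to drive the recursion, I expect $\|u_\de-u\|_\infty=\sup_X(u_\de-u)\le C\de^{\ga'}$ with $\ga'=\ga'(\ga,\tau,n)>0$. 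Then for $|x-y|$ small, $|u(x)-u(y)|\le2\|u_\de-u\|_\infty+\mathrm{Lip}(u_\de)\,|x-y|\le C\de^{\ga'}+C\de^{-1}|x-y|$, and optimizing in $\de$ gives $u\in C^{0,\ga'/(1+\ga')}(X)$.

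The main obstacle will be the Hermitian bookkeeping in this last step: the comparison principle, the capacity estimates and the Cegrell--Kolodziej inequalities hold only up to multiplicative constants and with the auxiliary form $\om_\de\neq\om$, so I must verify that the iteration still terminates with a \emph{quantitative} (polynomial-in-$\de$) rate rather than mere convergence---this is exactly where the polynomial weight defining $\cH(\tau)$, as opposed to a general admissible weight, is needed---and that the rescaling $\psi_\de$ interfaces correctly with the $L^1$-H\"older hypothesis, which is stated for $\om$-psh functions in $\cS$. I also need to record that the regularization $u_\de$ on the Hermitian manifold enjoys the three listed properties simultaneously. In the sufficiency direction the only non-classical point is that the integrations by parts produce $d\om$-terms, but these are lower order and absorbed into $\|v_0-v_1\|_{L^1}$.
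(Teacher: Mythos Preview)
Your necessity argument (conditions $\Rightarrow$ H\"older potential) is precisely the paper's: invoke Theorem~\ref{thm:intro-A} for a continuous solution, regularize via $u_\de$, use the H\"older hypothesis on $\cS$ to control $\int_X (u_\de - u)\,d\mu$, and run the Ko\l odziej iteration in its Hermitian form. The paper phrases this as ``follow the proof of \cite[Theorem~1.3]{KN4}, using \cite[Eq.~(1.1)]{K08} to establish \cite[Lemma~2.8]{KN4} in the present setting''; your worries about rescaling $u_\de$ into $PSH(\om)$ and about quantitative termination of the iteration via the polynomial weight are exactly the bookkeeping points one must (and can) check there.

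For the sufficiency (potential $\Rightarrow$ conditions) the paper takes a shorter route than your global integration by parts. It first shows (Lemma~\ref{lem:equiv-loc-glo}) that H\"older continuity of $\hat\mu$ on $\cS$ is equivalent to the analogous local property on coordinate balls, and then quotes the local results of \cite{DNS} and \cite{N17} on domains in $\bC^n$, where one writes $\om=dd^cg$, absorbs $g$ into the potential, and \emph{no torsion appears}. Your direct global approach should also succeed, but the claim that the $d\om$ and $dd^c\om$ commutator terms ``contribute $O(\|v_0-v_1\|_{L^1})$'' is too optimistic as stated: after integration by parts those terms carry $d(v_0-v_1)$ against currents of bounded mass, which is not controlled by $\|v_0-v_1\|_{L^1}$ alone. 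One can still close the estimate using the uniform bound $-1\le v_i\le 0$ --- this is exactly why the paper restricts to $\cS$ rather than all of $\{\sup v=0\}$, as noted in the remark following Lemma~\ref{lem:holder-sub} --- but the result is a H\"older rather than Lipschitz dependence, and you need another interpolation step. The paper's localization sidesteps this entirely at the cost of the extra Lemma~\ref{lem:equiv-loc-glo}.
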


Notice that the H\"older continuity of $\hat{\mu }$ on the larger subset $\{v\in PSH(\om): \sup_X v =0\}$ implies the $\cH(\tau)$ property and the H\"older continuity on $\cS$ (Propostion~\ref{prop:dn-moderate}). 
The  latter properties are independent. The examples \cite[Example 5.5]{DDGHKZ} or \cite[Example~2.5]{DN14} belong to $\cH(\tau)$ for every $\tau>0$, but they do not admit H\"older continuous potentials. On the other hand,  the well-known conjecture of Dinh, Nguyen, Sibony \cite[Problem~1.5]{DN14} predicted that the moderate property implies the H\"older continuity of the Monge-Amp\`ere potential or equivalently the H\"older continuity on $\cS$ of the functional associated to this measure.

As it was shown in \cite{KN4} (inspired by \cite{DDGHKZ}) the existence of H\"older continuous solution is a local problem. We apply Theorem~\ref{thm:intro-B} to get main results of \cite{hiep} and \cite{viet16} in the Hermitian setting. For example, this gives a H\"older continuous $\om$-psh potential for a  smooth volume form  of a compact smooth real hypersuface in $X$. 

Let us indicate some  motivations behind the study of  the Monge-Amp\`ere on Hermitian manifolds with measures on the  right hand side. 
Unlike in the K\"ahler case, one solves the equation not only for a function but also for  a constant on the right hand side. The range of those constants for a given
manifold seems to have a geometrical meaning. It comes up in  constructions of  $\om$ - psh functions with logarithmic poles like in \cite{TW12}, \cite{N16} (where
one solves the equation for approximants of Dirac measures);
in connection to problems involving holomorphic Morse inequalities (see \cite{KTo}), and others.
The parabolic Monge-Amp\`ere on a Hermitian manifold, the Chern-Ricci flow, is recently intensively studied (see \cite{gill11,gill13}, \cite{FTWZ}, \cite{Ni17}, \cite{TW12b,TW12a} \cite{TWYang15}, \cite{Zh17}). The flow is expected to play an important role in the classification of complex surfaces. In the context of parabolic equations the pluripotential estimates are also useful. For example, To \cite{To18} (independently, Nie \cite{Ni17} in particular cases) used results in \cite{DK12} and \cite{N16} to prove a conjecture by Tosatti and Weinkove  \cite{TW12b}. 
The geometric applications of pluripotential theory on Hermitian manifolds are discussed at length in surveys by Dinew \cite{Di16,Di19}.

 Another new topic is the complex dynamics on compact Hermitian manifolds. There the measures having interesting properties are often singular with respect to the volume form. In a recent paper  Vu \cite{viet19} showed that for any holomorphic dominant endomorphism $f$ of $X$ there exists an equilibrium measure $\mu_f$ associated to $f$. The understanding of  this measure is a central problem in complex dynamics (as in the K\"ahler setting). By  \cite[Theorem~1.1]{viet19} and Theorem~\ref{thm:intro-B} one gets that $\mu_f$ admits a H\"older continuous $\om$-psh potential. We refer  to \cite{DiF} for results on  the push-forwards of  measures by dominant meromorphic maps between complex manifolds.

\bigskip

{\em Acknowledgement. \rm} The authors are partially supported by NCN grant  2017/
27/B/ST1/ 01145. The second author is also partially supported by the start-up grant G04190056 of KAIST.

\section{preliminaries}
In this section we recall and extend some results from \cite{KN1,KN2,KN3}. 
Their statements are often more technical than the counterparts in the K\"ahler setting \cite{K05}.

Let $h : \mathbb R_+ \rightarrow (0, \infty ) $ be an increasing function such that
\[
\label{eq:admissible}
	\int_1^\infty \frac{1}{x [h(x) ]^{\frac{1}{n}} }  \, dx < +\infty.
\]
In particular, $\lim_{ x \rightarrow \infty} h(x) = +\infty$. Such a function $h$ is called
{\em admissible}. In what follows we often omit to stress that $h$ is admissible.
If $h$ is admissible, then so is $A_2 \, h (A_1x)$ for every  $A_1,A_2 >0$.
Define
\[
	F_h(x) = \frac{x}{h(x^{-\frac{1}{n}})}.
\]

Recall that the  analogue of Bedford-Taylor capacity on compact complex manifolds is
$$	cap_\om(E) := \sup\left\{\int_E \om_w^n : w\in PSH(\om), 0\leq w\leq 1\right\},
$$ where $PSH(\om)$ is the set of $\om$-psh functions   on $X$ and $ \om_w^n := (\om +dd^c w)^n$.
This capacity is equivalent to the Bedford-Taylor capacity \cite{BT82} defined locally (see \cite[page 52-53]{K05}).  

Let $\mu$ be a positive Radon measure satisfying
\[
\label{eq:dominate}
	\mu(E) \leq F_h( cap_\om (E)),
\]
for any Borel set $E \subset X$ and  some  $F_h$. 

Let us denote by $\cF(X,h)$ the set of all measures that are dominated by the capacity $cap_\om$ in the sense of \eqref{eq:dominate} for some admissible $h$. 

Some particular  families of measures which satisfy \eqref{eq:dominate} were mentioned in Introduction.
 Another fairly general family is given in the following example. Note that  these measures are often singular with respect to the Lebesgue measure and their potentials  may not be H\"older continuous.

\begin{expl} (\cite{KN5}) Let $\mu$ be a positive Borel measure such that it is locally dominated by Monge-Amp\`ere measures of continuous plurisubharmonic functions whose modulus of continuity $\vpi(t)$ satisfy the Dini-type condition
\[
	\int_0^1 \frac{[\vpi(t)]^\frac{1}{n}}{t |\log t|} dt <+\infty.
\] 
Then, $\mu \in \cF(X,h)$ for some admissible function $h$.
\end{expl}


Let us fix a  finite covering of $X$:
\[\label{eq:cover}\{B_j(s)\}_{j\in J} \quad \text{where}\quad B_j(s):= B(x_j,s)
\]  
 is the coordinate ball centered at $x_j$ of radius $s>0$. Take $s$ so small that $B(x_j,3s)$, $j\in J$, are still coordinate balls. Let $\chi_j$ be the partition of unity subordinate to $\{B_j(s)\}_{j\in J}\}$. 
The first observation is that if $\mu$ satisfies \eqref{eq:dominate} on $X$, then in each chart $B_j(3s)$ the same property holds for subsets of the smaller ball.

\begin{lem}\label{lem:restriction} Let $\mu\in \cF(X,h)$. Then, for every compact $K \subset B_j(s) \subset \Om:= B_j(3s)$, $j\in J$,
\[\label{eq:vc-local}
	\mu(K) \leq F_{h_{0}}\left(cap(K, \Om)\right).
\]
for an admissible function $h_0$ depending only on $h, \om, X$ and $\Om$, where $cap(K, \Om)$ is the relative capacity of Bedford and Taylor \cite{BT82}.
\end{lem}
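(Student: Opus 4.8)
The plan is to derive \eqref{eq:vc-local} from the global estimate \eqref{eq:dominate} by comparing the global capacity $cap_\om$ with the relative capacity $cap(\,\cdot\,,\Om)$, and then absorbing the comparison constant into the weight function. Concretely, it suffices to find a constant $C_0=C_0(\om,\Om)>0$ such that $cap_\om(K)\le C_0\,cap(K,\Om)$ for every compact $K\subset B_j(s)$. Granting this, observe that $F_h$ is increasing on $(0,\infty)$, being the product of the increasing functions $x\mapsto x$ and $x\mapsto h(x^{-1/n})^{-1}$ (here we use that $h$ is increasing). Applying \eqref{eq:dominate} to the Borel set $K$ then yields $\mu(K)\le F_h(cap_\om(K))\le F_h(C_0\,cap(K,\Om))=F_{h_0}(cap(K,\Om))$, where a direct computation shows that the identity $F_h(C_0 t)=F_{h_0}(t)$ holds with $h_0(x):=C_0^{-1}h(C_0^{-1/n}x)$. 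Since $h_0$ is of the form $A_2\,h(A_1x)$ with $A_1,A_2>0$, it is admissible, and it depends only on $h$ and $C_0$, hence only on $h,\om,X$ and $\Om$.

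For the capacity comparison I would work entirely inside the coordinate ball $\Om=B_j(3s)$, which we identify with a Euclidean ball with coordinates $z$ centered at $x_j$, and write $\be:=dd^c|z|^2$ for (a multiple of) the Euclidean form. Since the coefficients of $\om$ are bounded on $\Om$, there is $A=A(\om,\Om)>0$ with $\om\le A\be$ on $\Om$. Given a competitor $w\in PSH(\om)$ with $0\le w\le 1$, set $v:=w+A|z|^2$ on $\Om$. Then $dd^cv=(\om+dd^cw)+(A\be-\om)\ge 0$, so $v\in PSH(\Om)$, and $0\le v\le M:=1+A(3s)^2$, so that $u:=M^{-1}v-1$ is an admissible competitor in the definition of $cap(K,\Om)$. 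Expanding $(dd^cv)^n=\bigl((\om+dd^cw)+(A\be-\om)\bigr)^n$ by the binomial formula and discarding all the mixed terms — each of which is a positive measure because $A\be-\om\ge 0$ is a smooth form and the Bedford-Taylor products of $\om+dd^cw$ with smooth nonnegative forms are well defined and positive — gives $(\om+dd^cw)^n\le(dd^cv)^n=M^n(dd^cu)^n$ as measures on $\Om$. Integrating over $K$ (which is compactly contained in $\Om$ because $K\subset B_j(s)$) and taking the supremum over all such $w$ produces $cap_\om(K)\le M^n\,cap(K,\Om)$, so one may take $C_0=M^n$.

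The step that genuinely requires the Hermitian, rather than K\"ahler, toolbox is the monotonicity inequality $(\om+dd^cw)^n\le(dd^cv)^n$ used above: since $\om$ is not closed, the usual integration-by-parts arguments are unavailable, and one must instead rely on the Bedford-Taylor calculus for bounded $\om$-psh functions recalled from \cite{KN1,KN2,KN3} — specifically, on the fact that the currents $(\om+dd^cw)^k\wedge(A\be-\om)^{n-k}$ are well defined and positive. This can be seen by first verifying everything for smooth $w$, where it reduces to pointwise linear algebra, and then passing to the limit along a decreasing sequence of smooth functions in the local representation $\om+dd^cw=dd^cv-(A\be-\om)$. All remaining steps are routine; note that the hypothesis $K\subset B_j(s)$ enters only to ensure $K\subset\subset\Om$, so that $cap(K,\Om)$ is finite and the comparison is meaningful.
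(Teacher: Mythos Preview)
Your proposal is correct and follows essentially the same route as the paper: establish the comparison $cap_\om(K)\le C_0\,cap(K,\Om)$ via a local psh potential for $\om$, then absorb $C_0$ into the weight by setting $h_0(x)=C_0^{-1}h(C_0^{-1/n}x)$, exactly as the paper does (it simply cites \cite[page~53]{K05} for the comparison and writes down the same $h_0$). Your remark about needing a Hermitian ``toolbox'' for the inequality $(\om+dd^cw)^n\le(dd^cv)^n$ is a bit overstated: since $v$ is genuinely psh and bounded, the expansion $(dd^cv)^n=\sum_k\binom{n}{k}(\om+dd^cw)^k\wedge(A\beta-\om)^{n-k}$ is just the local Bedford--Taylor definition of the mixed Monge--Amp\`ere currents and requires no integration by parts.
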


\begin{proof}  The proof follows by the monotonicity of $h$ and the fact that
\[\notag
	cap_\om(K) \leq C_1 cap(K, \Om),
\]
where $C_1$ is a uniform bound for plurisubharmonic functions on $B(x_j, 3s)$ such that $v_j = 0$ on $\d \Om$ and $dd^c v_j \geq \om$ in $\Om$ (see \cite[page 53]{K05}). Thus, we can take $$h_0(x)= \frac{1}{C_1} h(C_1^{-\frac{1}{n}} x).$$
The proof is completed.
\end{proof}

The second observation is the following.

\begin{lem}\label{lem:app-seq} Let $\mu \in \cF(X,h)$. Let $\mu_{U_j}$ be the restriction of $\chi_j\mu$ to the local coordinate $\Om _j = B(x_j, 3s) \subset \bC^n$, 
where $U_j= B(x_j,s)$. Let $\rho_\veps$ be the standard smoothing kernel on $B(0, 3s)$. Then, 
\[\label{eq:app-seq}
	\mu_\veps(z):= \sum_{j\in J}  \mu_{U_j} *\rho_\veps(z-x_j)
\]
is the sequence of smooth measures which converge weakly to $\mu$ as $\veps$ tends to $0$. Moreover,  $\mu_\veps \in \cF(X, h_0)$ for an admissible function $h_0$ when $\veps$ is small enough.
\end{lem}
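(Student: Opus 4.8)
The plan is to prove the two assertions in turn; the weak convergence is the standard behaviour of mollification, while the capacity bound is the substantive point.

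\emph{Weak convergence.} Write $\mu=\sum_{j\in J}\chi_j\mu$ and recall that $\mu_{U_j}$ is $\chi_j\mu$ read in the chart $\Om_j$. For $\veps<s$ the support of $\mu_{U_j}*\rho_\veps$ stays in $\overline{B_j(s+\veps)}\Subset B_j(3s)$, so $\mu_\veps$ is a well-defined smooth positive measure on $X$. Given $f\in C(X)$, writing $\tilde f_j$ for its expression in $\Om_j$, one computes $\int_X f\,d\mu_\veps=\sum_{j\in J}\int_{\Om_j}(\tilde f_j*\rho_\veps)\,d\mu_{U_j}$ (using that $\rho_\veps$ is even). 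Since $\tilde f_j*\rho_\veps\to\tilde f_j$ uniformly on $\overline{B_j(s+\veps)}\supset\supp\mu_{U_j}$, and $J$ is finite with each $\mu_{U_j}$ a fixed finite measure, the right-hand side tends to $\sum_{j\in J}\int\tilde f_j\,d\mu_{U_j}=\sum_{j\in J}\int_X f\chi_j\,d\mu=\int_X f\,d\mu$.

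\emph{The capacity bound.} The mechanism is the elementary inequality, valid for a positive measure $\nu$ on a coordinate ball and a Borel set $E$,
\[\notag
(\nu*\rho_\veps)(E)=\int\rho_\veps(y)\,\nu(E-y)\,dy\le\sup_{|y|<\veps}\nu(E-y),
\]
whose virtue is that it keeps the test set \emph{rigid}, merely translating it, rather than fattening it to an $\veps$-neighbourhood whose capacity may be out of all proportion to that of $E$. Fix a compact $K\subset X$ and $\veps<s/2$. Since $\mu_{U_j}*\rho_\veps$ is supported in $\overline{B_j(s+\veps)}$, reading everything in $\Om_j$ and putting $K_j:=K\cap\overline{B_j(s+\veps)}$, a compact subset of $B_j(2s)$, the inequality above together with $0\le\chi_j\le1$ gives $(\mu_{U_j}*\rho_\veps)(K)\le\mu(E_j)$, where $E_j:=K_j-y_j\subset B_j(2s)\Subset B_j(3s)$ is a translate of $K_j$ by some $|y_j|<\veps$ in the coordinates of $\Om_j$. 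Now chain the capacity comparisons, keeping every constant independent of $\veps$ and of $j$. By Lemma~\ref{lem:restriction}, applied with the pair $B_j(2s)\Subset B_j(3s)$ in place of $B_j(s)\Subset B_j(3s)$ — which its proof covers equally — one has $\mu(E_j)\le F_{h_1}\bigl(cap(E_j,B_j(3s))\bigr)$ for an admissible $h_1$. Relative capacity is invariant under a simultaneous translation of the set and the domain and decreases when the domain grows; since $B_j(3s)-y_j\supset B_j(3s-\veps)$ (in the coordinates of $\Om_j$), this gives $cap(E_j,B_j(3s))=cap(K_j,B_j(3s)-y_j)\le cap(K_j,B_j(3s-\veps))$. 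As $B_j(3s-\veps)$ and $B_j(3s)$ both contain $K_j\subset B_j(2s)$ with margin $\ge s/2$, the standard comparison of relative capacities in nested domains yields $cap(K_j,B_j(3s-\veps))\le C_1\,cap(K_j,B_j(3s))$, and the equivalence of the relative capacity with $cap_\om$ on relatively compact subsets (\cite[p.~52--53]{K05}) yields $cap(K_j,B_j(3s))\le C_2\,cap_\om(K_j)\le C_2\,cap_\om(K)$. Since $F_{h_1}$ is increasing and $F_h(Cx)=F_{\tilde h}(x)$ for an admissible $\tilde h$ (recall $A_2h(A_1\,\cdot)$ is admissible), we conclude $(\mu_{U_j}*\rho_\veps)(K)\le F_{h_2}(cap_\om(K))$ with $h_2$ admissible and independent of $\veps$ and $j$. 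Summing over $J$ and using $|J|\,F_h=F_{h/|J|}$ gives $\mu_\veps(K)\le F_{h_0}(cap_\om(K))$ for an admissible $h_0$; by inner regularity of the smooth measure $\mu_\veps$ and monotonicity of $cap_\om$ this extends to every Borel set $E$, so $\mu_\veps\in\cF(X,h_0)$.

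\emph{Main obstacle.} The real work lies in securing all three capacity comparisons — absorbing the translation $y_j$, the nested-ball comparison, and the passage between the local relative capacity and $cap_\om$ — with constants uniform in $\veps$ over a fixed range $(0,s/2)$ and in $j$; this is precisely what forces the rigid-translation estimate for $\nu*\rho_\veps$ rather than a cruder bound by $\nu$ of an $\veps$-neighbourhood, since an $\veps$-neighbourhood of a small or pluripolar set can have capacity out of all proportion to that of the set itself. The remaining bookkeeping of the admissible weights (through $F_h(C\,\cdot)$ and the finite summation over $J$) is routine, given the stability of admissibility under $h\mapsto A_2h(A_1\,\cdot)$ noted in the preliminaries.
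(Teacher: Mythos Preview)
Your proof is correct and follows essentially the same route as the paper's. The paper's argument is compressed to three lines: reduce by finiteness of the cover to a single summand, invoke Lemma~\ref{lem:restriction} to place $\mu_{U_j}$ in $\cF(\Om_j,h_0)$, and cite \cite[Eq.~(3.5.1)]{K98} for the assertion that convolution with $\rho_\veps$ preserves the local capacity inequality \eqref{eq:vc-local}. What you have done is unpack that citation: the rigid-translation bound $(\nu*\rho_\veps)(E)\le\sup_{|y|<\veps}\nu(E-y)$ together with the translation-invariance and nested-domain comparison for relative capacity is exactly the content of the cited step in \cite{K98}. Your version also makes explicit the passage back from the local bound to $cap_\om$ and the bookkeeping with $F_h$ under scaling and finite summation, which the paper leaves implicit.
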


\begin{proof} Since the cover is finite, it is enough to show that each smooth measure of the right hand side belongs to $\cF(\Om, h_0)$ for an admissible function $h_0$. By Lemma~\ref{lem:restriction} it follows that $\mu_{U_j} \in \cF(\Om, h_0)$. Thanks to \cite[Eq.(3.5.1)]{K98} the convolutions with smoothing kernels preserve the inequality \eqref{eq:vc-local} when $\veps$ is small enough. 
\end{proof}

We recall the basic result in \cite{KN1}. Let $B>0$ be a constant such that
$$	-B \om^2 \leq 2n dd^c \om \leq B \om^2, \quad -B \om^3 \leq 4n^2 d\om \wed d^c \om \leq B \om^3.
$$
\begin{thm}
\label{thm:kappa}
Fix $ 0 < \varepsilon <1$. Let $ \varphi, \psi \in PSH (\omega)\cap L^\infty(X)$ 
be such that $\varphi \leq 0$, and $ -1 \leq \psi \leq 0$. Set 
$m(\varepsilon) = \inf_X [ \varphi - (1-\varepsilon) \psi]$, and 
$
\varepsilon_0:	\frac{1}{3}\min\{
	\varepsilon^n, 
	\frac{\varepsilon^3}{16 B}, 
	4 (1-\varepsilon) \varepsilon^n, 
	4 (1-\varepsilon)\frac{\varepsilon^3}{16 B} \}
$.
Suppose that 
$\omega_\varphi^n \in \cF(X,h)$. Then, for $0<t< \varepsilon_0$,
\[\label{eq:kappa}
	t \leq \kappa\left[ cap_\omega ( U(\varepsilon, t))\right],
\]
where $U(\varepsilon, t ) = \{ \varphi < (1- \varepsilon) \psi + m(\varepsilon) + t \}$, 
and the function $\kappa $ is defined on the interval $(0,cap_\omega(X))$ by the formula
\[\label{eq:kappa2}
	\kappa \left ( s^{-n} \right) 		4\, C_n  \left \{
			\frac{1}{ \left [ h ( s )\right]^{\frac{1}{n}} } 
			+ \int_{s}^\infty  \frac{dx}{x \left[ h (x) \right]^{\frac{1}{n}}}			
					\right \},
\]
with a dimensional constant $C_n$.
\end{thm}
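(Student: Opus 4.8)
The plan is to follow the now-standard iteration scheme from \cite{KN1,K05}, comparing $\vphi$ with the test function $(1-\veps)\psi$ on sublevel sets and exploiting the Chern–Levine–Nirenberg type inequalities that underlie the constant $B$. First I would reduce to the local picture: using Lemma~\ref{lem:restriction} (or rather the fact that $\om_\vphi^n\in\cF(X,h)$ together with the definition \eqref{eq:dominate}), for any Borel set $E$ we have $\om_\vphi^n(E)\le F_h(cap_\om(E))$. The goal is to run a De Giorgi–type iteration on the function $a\mapsto cap_\om(U(\veps,a))$ restricted to $a\in(0,\veps_0)$, so I would set $g(a):=\bigl[cap_\om(U(\veps,a))\bigr]$ and aim to show $g$ satisfies a differential inequality forcing a lower bound of the form \eqref{eq:kappa}.

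The key analytic input is a comparison of Monge–Ampère masses: on the Hermitian manifold one does not have $\int_{\{u<v\}}\om_v^n\le\int_{\{u<v\}}\om_u^n$ for free, but the results recalled from \cite{KN1} (this is exactly where the constants $B$ and $\veps_0$ enter) give an inequality of the shape
\[\notag
	\int_{U(\veps,s)}\om^n_{(1-\veps)\psi+m(\veps)+s}\ \le\ (1+\text{error})\int_{U(\veps,t)}\om_\vphi^n
\]
for $s<t<\veps_0$, where the error term is controlled by $B$ and by the smallness of $t$ relative to $\veps_0$; this is the Hermitian replacement for the naive comparison principle. On $U(\veps,s)$ one bounds below $\om^n_{(1-\veps)\psi+m(\veps)+s}\ge (1-\veps)^n\,\om_\psi^n$ (up to reorganizing the mixed terms, again absorbing errors into the choice of $\veps_0$), and choosing $\psi$ suitably — the standard choice being the relative extremal-type function of $U(\veps,t)$, normalized — one gets $\int_{U(\veps,s)}\om_\psi^n\gtrsim cap_\om(U(\veps,s))$. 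Combining these yields
\[\notag
	cap_\om\bigl(U(\veps,s)\bigr)\ \lesssim\ \frac{1}{(t-s)^n}\,\om_\vphi^n\bigl(U(\veps,t)\bigr)\ \le\ \frac{1}{(t-s)^n}\,F_h\bigl(cap_\om(U(\veps,t))\bigr),
\]
which is the recursive inequality driving everything.

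From here the argument is the classical one: feeding the recursion into itself along a geometric sequence $t_j=t(2-2^{-j})$ and using the admissibility condition \eqref{eq:admissible} (which guarantees $\sum 2^{-j}/[h(\cdot)]^{1/n}$ converges, so that the iteration does not blow up) one obtains that $cap_\om(U(\veps,t))\to 0$ controllably, and unwinding the estimate produces precisely the bound $t\le\ka[cap_\om(U(\veps,t))]$ with $\ka$ given by \eqref{eq:kappa2} — the two terms in \eqref{eq:kappa2}, namely $[h(s)]^{-1/n}$ and $\int_s^\infty dx/(x[h(x)]^{1/n})$, being exactly the ``first step'' and the ``tail of the series'' of the iteration. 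I expect the main obstacle to be the Hermitian error terms: keeping track of how $dd^c\om$ and $d\om\wedge d^c\om$ contribute when one expands $(\om+dd^c w)^n$ for $w=(1-\veps)\psi+m(\veps)+s$ and integrates by parts, and verifying that all of these are dominated by the stated $\veps_0$ so that the clean K\"ahler recursion survives with only multiplicative constants changed. This is precisely the technical content imported from \cite{KN1}, so in the write-up I would quote those estimates as a black box and concentrate on the (now purely real-variable) iteration that converts the recursive capacity inequality into \eqref{eq:kappa}.
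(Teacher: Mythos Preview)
The paper does not actually give a proof of this theorem: it is introduced with the sentence ``We recall the basic result in \cite{KN1}'' and is simply stated without argument. So there is no in-paper proof to compare against; your sketch is essentially a reconstruction of the argument in \cite{KN1} (itself the Hermitian adaptation of \cite{K98,K05}), and in broad outline it is the right one --- the recursive capacity inequality followed by the De~Giorgi-type iteration producing the two terms in \eqref{eq:kappa2}.

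There is, however, one genuine confusion in your write-up. The function $\psi$ is \emph{given} in the hypotheses of the theorem (a fixed element of $PSH(\om)\cap L^\infty(X)$ with $-1\le\psi\le 0$); you are not free to ``choose $\psi$ suitably'' to be an extremal function for $U(\veps,t)$. What actually happens in the proof of the key inequality (this is \cite[Lemma~5.4]{KN1}, quoted in the paper as \eqref{eq:cap-growth}) is that one fixes an \emph{additional} competitor $w\in PSH(\om)$ with $0\le w\le 1$ coming from the definition of $cap_\om$, and compares $\vphi$ against $(1-\veps)\psi + t w + m(\veps)$ on $U(\veps,s)$. The modified comparison principle from \cite{KN1} (this is where the specific form of $\veps_0$ and the constant $B$ enter) then yields
\[
	t^n\,cap_\om\bigl(U(\veps,s)\bigr)\ \le\ C\int_{U(\veps,s+t)}\om_\vphi^n,
\]
after which your iteration proceeds exactly as you describe. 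So your recursive inequality is correct, but the mechanism producing it involves a third function, not a choice of $\psi$; as written, your line ``choosing $\psi$ suitably --- the standard choice being the relative extremal-type function'' would contradict the theorem statement.
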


We use it to  to generalize the stability estimate  \cite[Proposition~2.4]{KN4} and \cite[Corollary~5.10]{KN1}. Let $\hbar(s) $ be the inverse function of $ \ka(s)$ and 
\[\label{eq:Ga}	
\Ga(s) \text{  the inverse function of } s^{n(n+2)+1} \hbar (s^{n+2}).
\]
Notice that
$	\Ga(s) \rightarrow 0 \quad \text{as } s \rightarrow 0^+.
$

\begin{prop}\label{prop:l1-stability} Let $\psi \in PSH(\omega) \cap C^0(X)$ and $\psi \leq 0$. Let $\mu \in \cF(X,h)$. Assume that $\vphi \in PSH(\om) \cap C^0(X)$ satisfies
$
	(\om +dd^c \vphi)^n = \mu .
$
Then, there exists a constant $C>0$  depending only on $\tau, \om$ and $\|\psi\|_\infty$ such that
\[\notag
	\sup_X(\psi - \vphi) \leq C\; \Ga\left(\left\|(\psi - \vphi)_+\right\|_{L^1(d\mu)} \right).
\]
\end{prop}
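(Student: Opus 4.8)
The plan is to run the classical Kołodziej-type stability argument, but using the auxiliary function $\kappa$ from Theorem~\ref{thm:kappa} (which already encodes the $\cF(X,h)$ domination) in place of the power-law estimates available in the $L^p$ setting. First I would reduce to controlling the size of the sublevel sets $U(\veps,t) = \{\vphi < (1-\veps)\psi + m(\veps) + t\}$ appearing in Theorem~\ref{thm:kappa}, applied with the roles of $\vphi,\psi$ as in that theorem (note $\vphi$ here is the solution, so $\om_\vphi^n = \mu \in \cF(X,h)$, which is exactly the hypothesis needed). The inequality $t \leq \kappa[cap_\om(U(\veps,t))]$ gives, upon inverting, a lower bound $cap_\om(U(\veps,t)) \geq \hbar(t)$ on the capacity of the set where $\vphi$ is much smaller than $(1-\veps)\psi$.

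Next I would bring in the $L^1(d\mu)$ hypothesis. On $U(\veps,t)$ one has $(\psi-\vphi)_+ \geq \veps|\psi| + t - m(\veps) - \text{(something)}$; more usefully, a Chebyshev-type inequality converts a lower bound on $\mu(U(\veps,t))$ into an upper bound on $t$ in terms of $\|(\psi-\vphi)_+\|_{L^1(d\mu)}$. To estimate $\mu(U(\veps,t))$ from below I would instead run the comparison in the other direction: the standard trick is to choose $\veps$ as a function of the $L^1$ norm and play off the capacity lower bound against a volume/capacity estimate, ultimately producing an inequality of the shape $s^{n(n+2)+1}\hbar(s^{n+2}) \leq C\|(\psi-\vphi)_+\|_{L^1(d\mu)}$ with $s = \sup_X(\psi-\vphi)$ up to constants. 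Inverting the increasing function $s \mapsto s^{n(n+2)+1}\hbar(s^{n+2})$ — whose inverse is precisely $\Ga$ by \eqref{eq:Ga} — then yields $\sup_X(\psi-\vphi) \leq C\,\Ga(\|(\psi-\vphi)_+\|_{L^1(d\mu)})$. The bookkeeping of how $\veps$, $m(\veps)$, and the exponents $n+2$ and $n(n+2)+1$ enter follows the template of \cite[Proposition~2.4]{KN4} and \cite[Corollary~5.10]{KN1}, with $\Ga$ already defined to absorb exactly these powers.

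The technically delicate step — and the main obstacle — is the passage from the capacity lower bound on $U(\veps,t)$ to a usable bound on $\mu(U(\veps,t))$, since $\mu$ need not dominate Lebesgue measure and one cannot use $L^p$ Hölder-type estimates; here one must use the capacity itself as the intermediary, exploiting that $cap_\om$ controls volumes of sublevel sets of $\om$-psh functions via the standard estimate $\mathrm{Vol}(\{u < -s\}) \leq C\exp(-c\,s^{1/?})$-type bounds, or more directly via the elementary inequality relating $\int_{\{\vphi < \psi - s\}}(\psi-\vphi)\,d\mu$ to $s\cdot cap_\om$. A second subtlety is that on a Hermitian (non-Kähler) manifold the constant $B$ from the torsion terms enters $\veps_0$, so one must check that the chosen $\veps$ stays in the admissible range $(0, \veps_0)$; this is routine but must be tracked to get the dependence of $C$ only on $\tau, \om, \|\psi\|_\infty$ as claimed. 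Finally one records that $\Ga(s)\to 0$ as $s\to 0^+$, already noted after \eqref{eq:Ga}, so the estimate is genuinely a stability statement.
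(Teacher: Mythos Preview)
Your overall framework is correct --- the capacity lower bound $cap_\om(U(\veps,t)) \geq \hbar(t)$ from Theorem~\ref{thm:kappa}, a Chebyshev step to bring in $\|(\psi-\vphi)_+\|_{L^1(d\mu)}$, and the final inversion via $\Ga$ are exactly what the paper does. However, the middle step is misdiagnosed. You describe the ``main obstacle'' as passing from a capacity lower bound on $U(\veps,t)$ to a bound on $\mu(U(\veps,t))$, and you propose exponential volume decay estimates or a vague relation between $\int (\psi-\vphi)\,d\mu$ and $s\cdot cap_\om$ as possible routes. Neither is what is actually used, and the first is off track.

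The ingredient you are missing is a capacity \emph{upper} bound in terms of $\mu$-mass. The paper invokes \cite[Lemma~5.4]{KN1} (recorded here as \eqref{eq:cap-growth}):
\[\notag
t^n\, cap_\om(U(\veps,s)) \leq C \int_{U(\veps,s+t)} \om_\vphi^n = C\,\mu(U(\veps,s+t)).
\]
This is the bridge between capacity and $\mu$, and it runs in the opposite direction from what you worried about: one bounds capacity from above by the $\mu$-mass of a slightly larger sublevel set, not the reverse. Once this is in hand the Chebyshev step is immediate: on $U(\veps,2t)$ one has $(\psi-\vphi)_+ \geq |S| - \veps - 2t$ (where $-S = \sup_X(\psi-\vphi)$), hence $\mu(U(\veps,2t)) \leq \|(\psi-\vphi)_+\|_{L^1(d\mu)}/(|S|-\veps-2t)$. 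Sandwiching $cap_\om(U(\veps,t))$ between $\hbar(t)$ and $C t^{-n}\mu(U(\veps,2t))$ gives
\[\notag
|S| - \veps - 2t \leq \frac{C\,\|(\psi-\vphi)_+\|_{L^1(d\mu)}}{t^n \hbar(t)},
\]
and the choice $t \sim \veps^{n+2}$ (forced by the Hermitian constraint $\veps_B = \frac{1}{3}\min\{\veps^n,\veps^3/16B\}$) produces exactly the exponents encoded in $\Ga$. So the argument is shorter than you feared once \eqref{eq:cap-growth} is identified as the key tool; there is no need to pass from capacity to measure in the direction you flagged as delicate.
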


\begin{proof} Without loss of generality we may assume that $-1\leq \psi \leq 0$. Put \[ \notag U(\veps, s) = \{\vphi<(1-\veps) \psi + \inf_X [\vphi -(1-\veps) \psi] +s \},\]
where $0<\veps<1$ and $s>0.$

\begin{lem} 
\label{lem:cap-level-set}
For  
$0 <s \leq \frac{1}{3}\min\{\veps^n, \frac{\veps^3}{16 B} \}$, 
$0< t \leq \frac{4}{3} (1-\veps) \min\{\veps^n, \frac{\veps^3}{16 B} \}$ we have 
\[\notag
	t^n \, cap_{\om} (U(\veps, s))
	\leq 	 C F_h\left(cap_\om (U(\veps, s+t))\right),
\]
where $C$ is a dimensional constant.
\end{lem}

\begin{proof}[Proof of Lemma~\ref{lem:cap-level-set}]
By \cite[Lemma~5.4]{KN1} 
\[\label{eq:cap-growth}
	t^n \, cap_{\om} (U(\veps, s))
	\leq 	 C \,  \int_{ U (\veps, s + t) }\om_\varphi^n,
\]
The lemma  follows from the assumption on the measure $\om_\vphi^n = \mu$.
\end{proof}

To finish the proof of the proposition  we proceed as in \cite[Proposition~2.4]{KN4} or \cite[Theorem~3.11]{KN3}, though under a weaker assumption.  One needs  to estimate
$$-S:= \sup_X (\psi - \vphi) > 0$$ in terms of $\|(\psi - \vphi)_+\|_{L^1(d\mu)}$ as in the K\"ahler case \cite{K03}. Suppose that 
\[ \label{slr-eq2} \|(\psi - \vphi)_+\|_{L^1(d\mu)} \leq \de, 
\]
where $\de:= \veps^{n(n+2)+1} \hbar (\veps^{n+2})$. 
 Consider sublevel sets $U(\veps, t) = \{\vphi< (1-\veps) \psi + S_\veps +t \}$, where $S_\veps = \inf_X [\vphi -(1-\veps)\psi]$. 
It is clear that \[\notag S - \veps \leq S_\veps \leq S.\] 
Therefore, $U(\veps,2t) \subset \{\vphi < \psi + S+ \veps +2t\}$. Then, $(\psi - \vphi)_+ \geq |S| - \veps -2t>0$ for $0< t < \veps_B$ and $0< \veps < |S|/2$ 
on the latter set (if $|S| \leq 2 \veps$ then we are done).
By \eqref{eq:cap-growth} we have
\begin{align*}
	cap_{\omega}(U(\veps,t)) 
	\leq \frac{C}{t^n} \int_{U(\veps,2t)} d\mu
&	\leq \frac{C}{t^n} \int_X \frac{(\psi -\vphi)_+}{(|S| - \veps -2t)}
		d\mu\\
&	\leq \frac{C \|(\psi - \vphi)_+\|_{L^1(d\mu)}}{t^n (|S| - \veps -2t)} .
\end{align*}
Moreover, by the inequality \eqref{eq:kappa} it follows that 
$ \hbar(t)  \leq cap_\omega(U(\veps,t)).$
Combining these inequalities, we obtain
\[\notag
	(|S| - \veps - 2t) 
	\leq \frac{C \|(\psi - \vphi)_+\|_{L^1(d\mu)}}{t^n \hbar(t)} .
\]
Therefore, using \eqref{slr-eq2},
\begin{align*}
|S| 
&	\leq \veps + 2t 
	+ \frac{C \|(\psi - \vphi)_+\|_{L^1(d\mu)}}{t^n \hbar(t)}  \\
&	\leq 3 \veps + \frac{C \de}{t^n \hbar(t)}.
\end{align*}
Recall that $\veps_B = \frac{1}{3} \min\{\veps^n, \frac{\veps^3}{16B}\}$. So, taking 
$
	t = \veps_B/2 \geq \veps^{n+2} $
we have
\[\notag \frac{\de}{\veps^{n(n+2)} \hbar(\veps^{n+2})} =\veps.\] 
Notice that we used the fact that $\hbar (s)$ is also increasing.
Hence  $|S| \leq C \veps$ with $C = C(\omega)$. Thus,
\[\notag
	\sup_X(\psi -\vphi) \leq C \; \Ga \left(\|(\psi - \vphi)_+\|_{L^1(d\mu)}\right).
\]
This is the desired stability estimate.
\end{proof}

There is always a uniform lower bound for the volume of Monge-Amp\`ere measures dominated by capacity. This is  essentially \cite[Proposition~2.4]{KN2}.

\begin{prop}
\label{barrier-funct} Consider 
 $\mu\in \cF(X,h)$ such that $ \mu (X) >0$.
Suppose $w\in PSH(\omega) \cap C(X)$ and $c>0$  solve 
\[
	(\omega + dd^c w)^n = c\; \mu, \quad \sup_X w =0,
\]
Then there exists a constant $V_{min}>0$ depending only on $X, \omega, h$ such that whenever
\begin{equation}
\label{mass-c1}
		\int_X d\mu \leq  2V_{min},
\end{equation}
we have $c\geq 2^n$. 
\end{prop}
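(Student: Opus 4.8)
The plan is to exploit the comparison principle together with an explicit barrier to show that if $c$ were small the total mass of $c\,\mu$ would be forced below $V_{min}$, contradicting \eqref{mass-c1}. First I would normalize and fix a small constant $a>0$ (depending on $X,\om$) so that $\om - a\,\om > 0$ is still positive; the point is that $a\,\om$ leaves room to absorb a small multiple of the Monge–Amp\`ere mass. The key quantitative input is the capacity estimate of Theorem~\ref{thm:kappa} applied to $\vphi = w$ and a suitable test function $\psi$: it gives, via the function $\ka$ (equivalently its inverse $\hbar$), a lower bound on $cap_\om(\{w < -s\})$ in terms of the total mass $\om_w^n(X) = c\,\mu(X)$. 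Concretely, since $w$ is continuous with $\sup_X w = 0$, for each $s>0$ the set $\{w<-s\}$ is either empty or has positive capacity, and Theorem~\ref{thm:kappa} (with $\psi\equiv 0$, $\veps$ fixed) yields $s \le \ka[cap_\om(\{w<-s\})]$, hence $cap_\om(\{w<-s\}) \ge \hbar(s)$.

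Next I would run the usual capacity–volume iteration: using the assumption $\om_w^n = c\,\mu \le c\, F_h(cap_\om(\cdot))$ on sublevel sets, one gets a self-improving inequality of the form $t^n\, cap_\om(\{w<-s-t\}) \le C\, c\, F_h(cap_\om(\{w<-s\}))$, exactly in the spirit of Lemma~\ref{lem:cap-level-set} but keeping track of the factor $c$. Iterating this De Giorgi–type scheme, the smallness of $\mu(X)$ (equivalently the smallness of $c\,\mu(X)$ when $c$ is assumed small) forces $cap_\om(\{w < -s_0\}) = 0$ for some explicit $s_0>0$ controlled by $\om, h$ alone, i.e.\ $w \ge -s_0$ everywhere. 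In fact the cleaner route, following \cite[Proposition~2.4]{KN2}, is: suppose for contradiction $c < 2^n$; construct from $w$ and the mass bound a continuous $\om$-psh function which is uniformly bounded below by a constant depending only on $X,\om,h$, then integrate $\om_w^n$ against this barrier to conclude $\int_X d\mu \ge c^{-1}\om_w^n(X)$ cannot be as small as $2V_{min}$ unless $c\ge 2^n$. I would define $V_{min}$ precisely as one-half of the threshold produced by this iteration.

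The main obstacle is making the constant $V_{min}$ genuinely independent of $w$ and $c$: every estimate must be traced so that it depends only on the admissible function $h$ (through $\ka$ and $F_h$), the manifold $X$, the metric $\om$, and the constant $B$ from Theorem~\ref{thm:kappa}, but \emph{not} on the solution. This is where one must be careful that the capacity lower bound $cap_\om(\{w<-s\}) \ge \hbar(s)$ is applied for a fixed choice of $\veps$ (say $\veps = \tfrac12$) so that $\veps_0$ and the range $(0,\veps_0)$ of valid $t$ are fixed once and for all, and that the number of iteration steps needed to kill the capacity depends only on how fast $F_h(x)/x \to 0$ as $x\to 0$ — which is encoded in admissibility of $h$. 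Once the iteration is set up with these uniform choices, the conclusion $c \ge 2^n$ under \eqref{mass-c1} follows by taking $V_{min}$ small enough that the hypothetical inequality $c<2^n$ together with $\int_X d\mu \le 2V_{min}$ drives the sublevel set $\{w<-s_0\}$ to have zero capacity for an $s_0$ large enough to contradict $\sup_X w = 0$ (since $w \not\equiv 0$ as $\mu(X)>0$ forces $\om_w^n \ne 0$).
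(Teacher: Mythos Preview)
Your proposal has the right ingredients --- Theorem~\ref{thm:kappa} for a capacity lower bound and the inequality \eqref{eq:cap-growth} for a capacity upper bound --- but it misidentifies the sublevel sets and then over-engineers the argument. With $\psi\equiv 0$ and any fixed $\veps$, Theorem~\ref{thm:kappa} yields $t\le \ka[cap_\om(U(\veps,t))]$ where $U(\veps,t)=\{w<\inf_X w + t\}$, i.e.\ sublevel sets just above the \emph{minimum} $S:=\inf_X w$, not sets of the form $\{w<-s\}$ near the supremum. There is no reason for your claimed inequality $s\le \ka[cap_\om(\{w<-s\})]$ to hold; and your proposed endgame (forcing $cap_\om(\{w<-s_0\})=0$ for large $s_0$) would only give $w\ge -s_0$, which is perfectly compatible with $\sup_X w=0$ and yields no contradiction.

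The paper's proof avoids any iteration entirely. One assumes $c\le 2^n$, so that $\om_w^n\le 2^n\mu$ and hence $\om_w^n\in\cF(X,h/2^n)$; then a \emph{single} application of \eqref{eq:cap-growth} at the correct sublevel set gives
\[
t^n\,cap_\om(\{w<S+t\}) \le C\int_{\{w<S+2t\}}\om_w^n \le 2^nC\,\mu(X),
\]
while Theorem~\ref{thm:kappa} (for the function $h/2^n$, with inverse $\hbar_0$) gives $cap_\om(\{w<S+t\})\ge \hbar_0(t)$ for $t$ in a fixed range depending only on $B$ and $n$. Combining these two lines yields $\mu(X)\ge t^n\hbar_0(t)/(2^nC)$ for a fixed $t=t_0$, and one simply sets $V_{min}$ to be one quarter of this quantity. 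No De Giorgi scheme, no barrier integration, and no $L^\infty$ bound on $w$ are needed: the contradiction is a direct \emph{lower} bound on $\mu(X)$ violating \eqref{mass-c1}, not an upper bound on the total mass of $c\mu$ as your first paragraph suggests.
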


\begin{proof}
Suppose  $c \leq 2^n$. We shall see that this  leads to a contradiction for some positive $V_{min}$. Firstly, we  have
$
	\omega_w^n \leq 2^n  \mu.
$
Therefore,  the Monge-Amp\`ere measure $\omega_w^n$ satisfies the inequality  \eqref{eq:dominate} for the admissible  function 
$
	 h(x)/2^n.
$
The inequality \eqref{eq:cap-growth} for $0< t\leq  \frac{1}{3} \min\{\frac{1}{2^n}, \frac{1}{2^7 B}\}$ then gives:
\[\notag
	t^n cap_\omega(\{w < S + t\}) \leq C \int_{\{ w < S + 2t\}} \omega_w^n
	\leq C \int_X  2^n d\mu ,
\]
where $S= \inf_X w$ and $C>0$ depends only on $n,B$. It implies that
\begin{equation}
\label{mc-eq2}
	\frac{t^n}{2^n C} \, cap_\omega(\{w < S + t\}) \leq \int_X   d\mu.
\end{equation}
The formula \eqref{eq:kappa2} for the function $\ka_0(x)$  corresponding to $\omega_w^n$ is 
\[\notag
	\ka_0(s^{-n}) = 8 C_n \left\{\frac{1}{[h(s)]^\frac{1}{n}} 
			+ \int_s^\infty \frac{dx}{x [h(x)]^\frac{1}{n}} \right\}.
\]
It is defined on $(0, cap_\omega(X))$.
Since $\ka_0 (x)$ is an increasing function it has the inverse $\hbar_0(x)$. It follows from \eqref{eq:kappa} that for $0< t\leq  \frac{1}{3} \min\{\frac{1}{2^n}, \frac{1}{2^7 B}\}$ we have
\[\notag
	\hbar_0 (t) \leq cap_\omega (\{w < S +t\}). 
\]
Coupling this  with \eqref{mc-eq2} we obtain
\begin{equation}
\label{l1-apriori-bound}
	\int_X  d\mu \geq \frac{t^n \hbar_0(t)}{2^n C}.
\end{equation}
Define 
\begin{equation}
\label{v-min}
	V_{min}:= \frac{t_0^n}{2^{n+2} C\hbar_0(t_0)}>0, \quad 
	t_0 = \frac{1}{6} \min\{\frac{1}{2^n}, \frac{1}{2^7 B}\}.	
\end{equation}
Then,  \eqref{l1-apriori-bound} and the above choices lead to a contradiction
\[\notag
	2 V_{min} \geq \int_X  d\mu \geq 4 V_{min}>0.
\]
Thus the proposition is proven.
\end{proof}

\section{Existence of continuous solutions}

In this section we generalize  the results of \cite{KN1, KN4} on the existence of continuous solutions of the Monge-Amp\`ere equation.  
This is also the extension of \cite{K98,K05} from K\"ahler to Hermitian setting.  We prove 
the first theorem in the introduction.

\begin{thm}\label{thm:main} Let $\mu \in \cF(X,h)$ be such that  $\mu(X)>0$. Then, there exists a continuous $\om$-psh  function $u$ and a constant $c>0$ solving the equation
\[\notag
	(\om + dd^c u)^n = c\; \mu.
\]
\end{thm}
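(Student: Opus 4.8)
The plan is to approximate $\mu$ by the smooth measures $\mu_\veps$ from Lemma~\ref{lem:app-seq}, solve the Monge-Amp\`ere equation for each $\mu_\veps$ using the Tosatti-Weinkove theorem \cite{TW10b}, and then pass to the limit using the a priori estimates coming from Theorem~\ref{thm:kappa}, Proposition~\ref{prop:l1-stability} and Proposition~\ref{barrier-funct}. More precisely, for each small $\veps>0$ we first normalize $\mu_\veps$ so that its total mass equals that of $\mu$ (possible since $\mu_\veps \to \mu$ weakly and $\mu(X)>0$); by \cite{TW10b} there is a smooth $\om$-psh $u_\veps$ and a constant $c_\veps>0$ with $(\om+dd^c u_\veps)^n = c_\veps\,\mu_\veps$ and $\sup_X u_\veps = 0$. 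By Lemma~\ref{lem:app-seq} all the $\mu_\veps$ lie in a \emph{fixed} class $\cF(X,h_0)$, so the constants $B$, $V_{min}$, $\ka$, $\hbar$, $\Ga$ attached to them are uniform in $\veps$.

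The first step is to control the constants $c_\veps$ from above and below. Integrating the equation and using $\om_{u_\veps}^n \le e^{\text{(bounded)}}\om^n$ type mass estimates on Hermitian manifolds (equivalently, the uniform bound $\int_X \om_{u_\veps}^n \le C(X,\om)$ valid for all $\om$-psh functions), one gets $c_\veps \le C/\mu(X)$. For the lower bound one distinguishes two cases exactly as the theory dictates: if $\mu(X)$ is large there is nothing to prove, while if $\mu(X) \le 2V_{min}$ one invokes Proposition~\ref{barrier-funct} applied to $w = u_\veps$ and the measure $\mu_\veps$ (scaled) to conclude $c_\veps \ge 2^n$; either way $c_\veps$ stays in a compact subinterval of $(0,\infty)$ independent of $\veps$. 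Passing to a subsequence, $c_\veps \to c > 0$.

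The second, main step is the uniform $L^\infty$ bound and equicontinuity of $\{u_\veps\}$. The $L^\infty$ bound $\|u_\veps\|_\infty \le C$ follows from Theorem~\ref{thm:kappa} by the standard Ko\l odziej-type argument: with $\psi \equiv 0$, the function $\ka$ (built from the fixed $h_0$) controls $cap_\om(\{u_\veps < \inf_X u_\veps + t\})$ from below by $t$, and since $\ka(s) \to 0$ as $s \to 0^+$ this forces $|\inf_X u_\veps|$ to be bounded. For equicontinuity, I would apply the stability estimate Proposition~\ref{prop:l1-stability}: comparing $u_\veps$ and $u_{\veps'}$ (or $u_\veps$ and a mollification/translate of itself), the $L^1(d\mu_\veps)$-closeness of the two potentials — which follows from weak convergence $\mu_\veps \to \mu$ together with the uniform bounds — gives, via the modulus $\Ga$ with $\Ga(s)\to 0$, that $\sup_X|u_\veps - u_{\veps'}| \to 0$. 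Since $\Ga$ is the \emph{same} for all these measures, $\{u_\veps\}$ is a Cauchy sequence in $C^0(X)$, hence converges uniformly to a continuous $\om$-psh limit $u$.

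The final step is convergence of the Monge-Amp\`ere measures. Uniform convergence $u_\veps \to u$ of bounded $\om$-psh functions gives weak convergence $(\om+dd^c u_\veps)^n \to (\om+dd^c u)^n$ by the Bedford-Taylor continuity theorem (valid in the Hermitian setting since the relevant convergence is local), while $c_\veps\,\mu_\veps \to c\,\mu$ weakly by construction; hence $(\om+dd^c u)^n = c\,\mu$. The step I expect to be the main obstacle is obtaining the equicontinuity of $\{u_\veps\}$ uniformly in $\veps$: this is exactly where one needs the stability estimate to hold with a modulus $\Ga$ that does not degenerate along the approximating sequence, and it is the reason Lemma~\ref{lem:app-seq} (keeping all $\mu_\veps$ in one fixed class $\cF(X,h_0)$) and Proposition~\ref{prop:l1-stability} were set up first. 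The Hermitian (non-K\"ahler) nature of $\om$ enters only through the torsion constant $B$ and the fact that one must solve for the auxiliary constant $c$, both of which have already been accommodated in the cited results.
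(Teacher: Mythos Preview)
Your overall scheme matches the paper's, but there is a genuine gap at the step you treat as routine: the \emph{upper bound} for $c_\veps$. On a Hermitian manifold one does \emph{not} have a uniform a priori estimate $\int_X \om_{u_\veps}^n \le C(X,\om)$ for arbitrary bounded $\om$-psh functions; the Monge--Amp\`ere mass can only be controlled in terms of $\|u_\veps\|_\infty$ (because integration by parts produces torsion terms $d\om$, $dd^c\om$ multiplied by $u_\veps$). Your argument is therefore circular: to bound $c_\veps$ you need the mass bound, which needs $\|u_\veps\|_\infty$, which needs $c_\veps\mu_\veps$ to lie in a fixed $\cF(X,h_1)$, which needs $c_\veps$ bounded. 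The paper explicitly flags this as ``the main difficulty'' and bypasses it by a \emph{local} construction: choose a coordinate ball $U\subset\Om$ where $\mu_U*\vrh_\veps$ has mass $\ge C_1>0$, solve the local Dirichlet problem $(dd^c v_\veps)^n=\mu_U*\vrh_\veps+\veps\,\om^n$ with $v_\veps|_{\d\Om}=0$, bound $\|v_\veps\|_{L^\infty(\Om)}$ via \cite{Ko96} using only $h_0$ (no $c_\veps$), and then combine the mixed-form inequality $\om_{u_\veps}\wedge(dd^c v_\veps)^{n-1}\ge c_\veps^{1/n}R_\veps\,\om^n$ with Demailly's Chern--Levine--Nirenberg inequality to get $c_\veps^{1/n}C_1\le C\|v_\veps\|_\infty^{n-1}(\|\rho_\Om\|_{L^1}+\|u_\veps\|_{L^1})$. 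The right side is uniformly bounded because $\|u_\veps\|_{L^1}$ is (compactness of $\sup$-normalized $\om$-psh functions), and this yields $c_\veps\le C$ without circularity.

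A secondary issue: your equicontinuity step is too optimistic. Asserting that $\|u_\veps-u_{\veps'}\|_{L^1(d\mu_\veps)}\to 0$ ``follows from weak convergence $\mu_\veps\to\mu$ together with the uniform bounds'' is not a proof---weak convergence of measures does not give convergence of integrals against a \emph{varying} sequence of integrands. The paper first passes to an $L^1(\om^n)$-limit $u$ (Hartogs-type compactness) and then proves $\int_X|u_\veps-u|\,d\mu_\veps\to 0$ by invoking the domination $\mu\le A\,cap_\om$ and the Cegrell/Guedj--Zeriahi argument \cite[Lemma~4.4]{GZ07}; only then does Proposition~\ref{prop:l1-stability} upgrade this to uniform convergence. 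Your lower-bound argument for $c_\veps$ is also off: Proposition~\ref{barrier-funct} handles the small-mass case, but ``if $\mu(X)$ is large there is nothing to prove'' does not produce a lower bound for $c_\veps$; the paper instead cites \cite[Lemma~5.9]{KN1}, which uses that $\mu_\veps(X)$ is uniformly bounded \emph{above}.
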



\begin{proof} The proof follows the scheme of the one in \cite[Theorem~1.3]{KN4}. We only clarify the differences. Let $\mu_\veps$ be the approximating sequence from Lemma~\ref{lem:app-seq}.  By \cite[Theorem~0.1]{KN1} there exist $u_\veps \in PSH(\om) \cap C^0(X)$ and a constant $c_\veps >0$ solving
$$	(\om + dd^c u_\veps)^n = c_\veps \mu_\veps, \quad \sup_X u_\veps =0. 
$$	
The main difficulty lies in proving the uniform upper bound for constants $\{c_\eps\}$ which requires a bit different approach compared to \cite{KN1, KN4}. 

Since  $\lim_{\veps \to 0} \mu_\veps(X) = \mu(X)$, there exist a ball $U= B(a,s) \subset U' = B(a, 2s)$ in the finite open cover \eqref{eq:cover} and a positive constant $C_1>0$ such that
\[\label{eq:uni-low}	\mu_{U} * \vrh_
\veps (X) =  \mu_{U} * \vrh_
\veps (U') \geq  C_1 \quad 
\]
for every small $\veps>0$, where we recall  $\mu_U$ is the restriction of $\chi_a \mu$ to $U$, and $\chi_a$ is the smooth function in the partition of unity subordinate to $\{B(x_j, s)\}$. 
Let us denote $\Om= B(a,3s)$. Thanks to \cite{CKNS85} there is  $v_\veps \in PSH(\Om) \cap C^\infty(\bar \Om)$ such that  $(dd^c v_\veps)^n =  \mu_U * \vrh_\veps + \veps \om^n$ and $v_\veps = 0$ on $\d\Om$. 
By Lemma~\ref{lem:app-seq} and \cite{Ko96} it follows that 
\[\label{eq:uniform-v}	\| v_\veps \|_{L^\infty(\Om)} \leq C_2 = C(\Om, h_0).
\]
It is clear that $\mu_\veps \geq \mu_U * \vrh_\veps$ on $\Om$. Let us  write 
$\mu_U * \vrh_\veps  = R_\veps \om^n$ for a smooth function $R_\veps$ in $\Om$. Using the mixed forms type inequality \cite[Lemma~2.2]{KN2} we have
$$\begin{aligned}
	\om_{u_\veps} \wed (dd^c v_\veps)^{n-1} 
&\geq 	\left(\frac{\om_{u_\veps}^n}{(dd^cv_\veps)^n}\right)^\frac{1}{n} (dd^c v_\veps)^n  \\
&\geq		\left(\frac{c_\veps R_\veps}{R_\veps + \veps}\right)^\frac{1}{n}	(R_\veps + \veps) \om^n \\
&\geq	c_\veps^\frac{1}{n} R_\veps \om^n.
\end{aligned}$$
Therefore,
\[\label{eq:low-inq}
	\int_{\Om'} \om_{u_\veps} \wed (dd^c v_\veps)^{n-1} \geq c_\veps^\frac{1}{n} C_1.
\]
Fix a strictly plurisubharmonic function $\rho_\Om$ in $\Om$ such that $\om \leq dd^c \rho_\Om$. Then  the Demailly's version of the Chern-Levine-Nirenberg inequality \cite{De85} gives
$$\begin{aligned}
	\int_{\Om'} \om_{u_\veps} \wed (dd^cv_\veps)^{n-1} 
&\leq 	\int_{\Om'} dd^c (u+\rho_\Om) \wed  	 (dd^cv_\veps)^{n-1} \\
&\leq 	C(\Om',\Om) \|v_\veps\|_{L^\infty(\Om)}^{n-1} \left( \|\rho_\Om\|_{L^1(\Om)} + \|u_\veps\|_{L^1(\Om)} \right) \\
\end{aligned}$$
Notice that $\int_X |u_\veps| \om^n$ is uniformly bounded (see e.g. \cite[Proposition~2.5]{DK12}). These combined with \eqref{eq:uniform-v} and \eqref{eq:low-inq}  give  the uniform upper bound for $\{c_\veps\}_{\veps>0}$. The uniform lower bound of this sequence follows from \cite[Lemma~5.9]{KN1} as $
\mu_\veps(X)$ is uniformly bounded. By the proof of \cite[Corollary~5.6]{KN1} it follows
\[\notag
	\|u_\veps\|_{L^\infty(X)} < C.
\]

Now we continue as in the proof of \cite[Theorem~1.3]{KN4}. 
Since the sequence $\{u_\veps\}_{\veps>0}$ normalized by $\sup_X u_\veps =0$ is a compact subset of $L^1(X)$, passing to a subsequence, we may assume  that
\[\label{eq:l1-cov}
	 u_\veps \longrightarrow u \mbox{ in }L^1(X);
\]
moreover, $u\in PSH(\om) \cap L^\infty(X)$ and also $\lim_{\veps \to 0} c_\veps = c>0.$

We wish to apply Proposition~\ref{prop:l1-stability} to conclude that  the convergence \eqref{eq:l1-cov} is in $C^0(X).$ This amounts  to showing that 
\[\label{eq:converge-e}
	\lim_{ \veps \to 0} \int_X |u_\veps -u| d \mu_\veps =0.
\]
By \eqref{eq:dominate} the measure $\mu$ satisfies  
\[\notag
	\mu(K) \leq A cap_\om(K) 
\]
for all Borel sets  $K\subset X$, where $A$ is a uniform constant. Furthermore, the potentials are uniformly bounded, so we can repeat the arguments of \cite[Lemma~4.4]{GZ07}  (see also \cite[Lemma~5.2 and Proof of Theorem~5.1]{Ce98}) to finish the proof of \eqref{eq:converge-e}. 
Finally, we get that $u_\veps$ converges to $u$ in $C^0(X)$, which is a solution to  $\om_u^n = c \mu$. 
\end{proof}

\begin{cor} \label{cor:good-stability} Suppose that $\mu_j \in \cF(X, h)$ and it is smooth  for every $j\geq 1$. Let $\mu_j$ converge weakly to $\mu \in \cF(X, h)$ as 
$j\to +\infty$. For  $j\geq 1$ let us solve
$$	(\om+ dd^c v_j)^n = e^{v_j} \mu_j.
$$ Then  $v_j$ converges uniformly to a continuous $\om$-psh function $v$ as $j\to +\infty$. Consequently, $v$  is  the unique continuous $\om$-psh solution to $\om_v^n = e^v\mu$. 
\end{cor}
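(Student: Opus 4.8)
The plan is to run, for the family $\{v_j\}$, the compactness-plus-stability scheme of the proof of Theorem~\ref{thm:main}, the factor $e^{v_j}$ now playing the role of the unknown constant $c_\veps$ there. Existence of $v_j\in PSH(\om)\cap C^0(X)$ solving $\om_{v_j}^n=e^{v_j}\mu_j$ for smooth $\mu_j\in\cF(X,h)$ is furnished by \cite{KN1} (for nondegenerate $\mu_j$ one can instead use the continuity method as in \cite{CKNS85,TW10b}, the exponential factor obviating the unknown constant).

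The heart of the matter, and the step I expect to be the main obstacle, is a bound $\|v_j\|_{L^\infty(X)}\le C$ uniform in $j$. What makes it work is that all the $\mu_j$ are dominated by $cap_\om$ through the \emph{same} $F_h$, and that $\mu_j(X)\to\mu(X)$, so the masses $\mu_j(X)$ are uniformly bounded and bounded away from $0$ when $\mu(X)>0$. Integrating the equation and using the uniform estimate $\int_X\om_{v_j}^n\le C_0$ (cf.\ \cite[Proposition~2.5]{DK12}) gives $\int_X e^{v_j}\,d\mu_j\le C_0$, from which $\sup_X v_j\le C$ once one notes, via $\mu_j\le F_h(cap_\om(\cdot))$ and the standard estimate $cap_\om(\{w<-s\})\le C(X,\om)/s$ for $\om$-psh $w$ with $\sup_X w=0$, that $\int_X e^{v_j-\sup_X v_j}\,d\mu_j$ stays bounded below by a positive constant. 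With $\sup_X v_j$ bounded, $\om_{v_j}^n\le e^{C}\mu_j$ lies in $\cF(X,e^{-C}h)$ with $e^{-C}h$ admissible and independent of $j$, so Ko\l odziej's $L^\infty$-estimate on Hermitian manifolds (\cite{Ko96}, \cite[Corollary~5.6]{KN1}) applied to $v_j-\sup_X v_j$ bounds $\operatorname{osc}_X v_j$; a lower bound for $\inf_X v_j$ then follows, e.g.\ by comparing $v_j$ with a fixed continuous $\om$-psh subsolution supplied by Theorem~\ref{thm:main} through the domination principle. Once the uniform bound is in place, the rest follows the proof of Theorem~\ref{thm:main} with the obvious modifications.

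Indeed, the uniform bound makes $\{v_j\}$ relatively compact in $L^1(X)$; let $v\in PSH(\om)\cap L^\infty(X)$ be a subsequential limit, say $v_{j_k}\to v$. Since the potentials are uniformly bounded and $\mu_{j_k}(E)\le A\,cap_\om(E)$ with $A$ uniform, the argument of \cite[Lemma~4.4]{GZ07} used in the proof of Theorem~\ref{thm:main} yields $\int_X|v_{j_k}-v_{j_l}|\,d(e^{v_{j_k}}\mu_{j_k})\to 0$ as $k,l\to\infty$, and Proposition~\ref{prop:l1-stability}, applied to $v_{j_k}$ and $v_{j_l}$ (whose $L^\infty$-norms are uniformly controlled, with right-hand sides $\om_{v_{j_k}}^n=e^{v_{j_k}}\mu_{j_k}$ lying in a fixed class $\cF(X,h')$), shows that $\{v_{j_k}\}$ is Cauchy in $C^0(X)$. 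Hence $v_{j_k}\to v$ uniformly, $v\in C^0(X)$, so $\om_{v_{j_k}}^n\to\om_v^n$ weakly while $e^{v_{j_k}}\mu_{j_k}\to e^v\mu$ weakly (as $e^{v_{j_k}}\to e^v$ uniformly and $\mu_{j_k}\to\mu$ weakly), and therefore $\om_v^n=e^v\mu$. Uniqueness of the continuous $\om$-psh solution of this equation follows from the comparison principle on Hermitian manifolds (\cite{KN1,DK12}): if $v,v'$ both solved it and $m:=\max_X(v-v')>0$, then on $U=\{v>v'+m-\veps\}\neq\varnothing$ comparison gives $\int_U\om_v^n\le\int_U\om_{v'}^n$ while $\om_v^n=e^{v-v'}\om_{v'}^n\ge e^{m-\veps}\om_{v'}^n$ there, contradicting $\int_U\om_{v'}^n>0$; if instead $\mu(U)=0$ for every $\veps<m$, one gets $\om_{v'}^n(\{v>v'\})=0$ and the domination principle forces $v'\ge v$, again a contradiction. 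Uniqueness of the cluster point forces $v_j\to v$ in $L^1(X)$, and rerunning the stability argument for the full sequence upgrades this to uniform convergence, which proves the corollary.
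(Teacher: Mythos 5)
There is a genuine gap at the central step, the uniform bound on $\sup_X v_j$. Your argument rests on the claim that $\int_X \om_{v_j}^n \le C_0$ uniformly in $j$, attributed to \cite[Proposition~2.5]{DK12}. That proposition gives the uniform bound $\int_X |u|\,\om^n\le C$ for $\om$-psh $u$ normalized by $\sup_X u=0$; it is not a bound on total Monge--Amp\`ere masses. More seriously, on a non-K\"ahler Hermitian manifold $\int_X\om_v^n$ is not a cohomological invariant: integration by parts produces torsion terms of the type $\int_X v\, dd^c\om\wedge(\dots)$ and $\int_X v\, d\om\wedge d^c\om\wedge(\dots)$, so every available bound on the total mass depends on $\|v\|_\infty$. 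Since the whole point of this step is to bound $\sup_X v_j$ --- the quantity playing the role of the constants $c_\veps$, which the paper explicitly flags as the main difficulty in the proof of Theorem~\ref{thm:main} --- your reasoning is circular. The paper's proof (deferring to \cite[Theorem~2.1]{N16}, made applicable by the new estimates) avoids total masses altogether: one fixes a coordinate ball carrying a uniform amount of $\mu_j$-mass as in \eqref{eq:uni-low}, solves a local Dirichlet problem whose solution is uniformly bounded as in \eqref{eq:uniform-v}, and combines the mixed-forms inequality, giving the analogue of \eqref{eq:low-inq} with $e^{v_j/n}$ in place of $c_\veps^{1/n}$, with the Chern--Levine--Nirenberg inequality, whose right-hand side involves only the $L^1$-norm of the normalized potential. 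That is what produces the upper bound for $\sup_X v_j$. (Your lower bound for $\int_X e^{v_j-\sup_X v_j}\,d\mu_j$ via the capacity domination and $\mu_j(X)\to\mu(X)>0$ is correct, but it feeds on the missing premise.)

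A secondary issue is the uniqueness argument: you invoke the comparison principle in the form $\int_U\om_v^n\le\int_U\om_{v'}^n$ on $U=\{v>v'+m-\veps\}$, which fails on Hermitian manifolds; only the modified comparison principle \cite[Theorem~0.2]{KN1}, with the multiplicative error $\bigl(1+Ct/\tau^n\bigr)$, is available. The strict gain $e^{m-\veps}>1$ does allow one to absorb this error for suitable parameters --- this is exactly how \cite[Lemma~2.3]{N16}, which the paper cites for uniqueness, proceeds --- but as written your step is not justified. The remainder of your scheme (relative compactness in $L^1$, the \cite[Lemma~4.4]{GZ07}-type argument to get $\int_X|v_{j_k}-v|\,d(e^{v_{j_k}}\mu_{j_k})\to0$, Proposition~\ref{prop:l1-stability} to upgrade to uniform convergence, and passing to the limit in the equation) does follow the paper's scheme and would be fine once the uniform $L^\infty$ bound is established correctly.
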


\begin{proof} With the estimates \eqref{eq:uni-low}, \eqref{eq:uniform-v} and \eqref{eq:low-inq} at hand the proof of \cite[Theorem~2.1]{N16} is readily adaptable to  this setting which gives the existence of a continuous solution. The uniqueness follows from \cite[Lemma~2.3]{N16} with the same proof.
\end{proof}

\begin{cor} \label{cor:max}Let $\mu \in \cF(X, h)$ and $\la>0$. Then, there exists a unique continuous $\om$-psh solution $v$ to 
$$	(\om+ dd^c v)^n = e^{\la v} \mu.
$$
\end{cor}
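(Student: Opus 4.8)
The plan is to reduce everything to the case $\la=1$ treated in Corollary~\ref{cor:good-stability}, by a harmless rescaling of the Hermitian metric. Writing $w:=\la v$, one has $\om+dd^c v=\tfrac1\la(\la\om+dd^c w)$, so $v$ is a continuous $\om$-psh function with $(\om+dd^c v)^n=e^{\la v}\mu$ if and only if $w$ is a continuous $(\la\om)$-psh function with
\[\notag
	(\la\om+dd^c w)^n=\la^n\,e^{w}\mu .
\]
So I would first set $\tilde\om:=\la\om$ (again a Hermitian metric on $X$) and $\tilde\mu:=\la^n\mu$, and show that the equation $(\tilde\om+dd^c w)^n=e^{w}\tilde\mu$ has a unique continuous $\tilde\om$-psh solution.

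The second step is to verify that $\tilde\mu$ lies in the relevant class for $\tilde\om$. The only non-formal point is that, a priori, the membership $\mu\in\cF(X,h)$ is tied to $\om$; but $cap_{\tilde\om}$, exactly like $cap_\om$, is equivalent to the local Bedford--Taylor capacity (cf. \cite[pp.~52--53]{K05}), hence $cap_{\tilde\om}$ and $cap_\om$ are comparable. As in the proof of Lemma~\ref{lem:restriction}, this shows $\mu\in\cF_{\tilde\om}(X,h')$ with $h'$ of the form $A_2\,h(A_1x)$, and then $\tilde\mu=\la^n\mu\in\cF_{\tilde\om}(X,h'/\la^n)$; the weight $h'/\la^n$ is again admissible by the remark following \eqref{eq:admissible}. (The smooth approximants required by Corollary~\ref{cor:good-stability} are simply $\la^n$ times those supplied by Lemma~\ref{lem:app-seq}.)

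With this in hand I would invoke Corollary~\ref{cor:good-stability}, read with $\om$ replaced by $\tilde\om$ and $\mu$ by $\tilde\mu$: it yields a unique continuous $\tilde\om$-psh function $w$ solving $(\tilde\om+dd^c w)^n=e^{w}\tilde\mu$. Setting $v:=w/\la$ gives a continuous $\om$-psh solution of $(\om+dd^c v)^n=e^{\la v}\mu$, and uniqueness transfers: two such $v_1,v_2$ produce $\la v_1,\la v_2$ solving the rescaled equation, which coincide by Corollary~\ref{cor:good-stability}, whence $v_1=v_2$.

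I do not expect a real obstacle: the entire argument is the bookkeeping of the scaling together with the observation that all results quoted are valid for an arbitrary Hermitian metric on $X$, so applying them to $\la\om$ is legitimate. Should one wish to avoid changing the metric, the alternative is to rerun the proof of \cite[Theorem~2.1]{N16} as adapted in Corollary~\ref{cor:good-stability}, with $e^{\la v}$ in place of $e^{v}$ and uniqueness taken from \cite[Lemma~2.3]{N16}; the a priori inputs \eqref{eq:uni-low}, \eqref{eq:uniform-v}, \eqref{eq:low-inq} do not involve $\la$, so this works equally well but is less economical.
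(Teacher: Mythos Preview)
Your argument is correct. The scaling $w=\la v$, $\tilde\om=\la\om$, $\tilde\mu=\la^n\mu$ reduces the problem to the case $\la=1$, and the check that $\tilde\mu$ lies in the appropriate $\cF_{\tilde\om}$-class is routine for the reasons you give: $cap_{\tilde\om}$ and $cap_\om$ are comparable (both being equivalent to the local Bedford--Taylor capacity), and admissibility of $h$ is preserved under dilations and positive scalar multiples.

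This is, however, a different route from the paper's. The paper's proof is the one-line remark that Corollary~\ref{cor:good-stability} applies directly to the approximating sequence $\mu_\veps$ from Lemma~\ref{lem:app-seq}; since Corollary~\ref{cor:good-stability} is stated only for $\la=1$, what is implicitly meant is that its proof (the adaptation of \cite[Theorem~2.1]{N16}, together with uniqueness from \cite[Lemma~2.3]{N16}) goes through verbatim with $e^{\la v}$ in place of $e^v$ --- exactly the alternative you sketch in your last paragraph. Your scaling trick is more economical: it uses Corollary~\ref{cor:good-stability} as a black box rather than reopening its proof, at the small cost of transferring the $\cF(X,h)$ hypothesis across a change of metric. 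Both approaches are valid; yours makes it transparent that no new analytic input is needed for general $\la>0$.
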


\begin{proof} It is a simple application of Corollary~\ref{cor:good-stability} for the approximating sequence $\mu_\veps$   from Lemma~\ref{lem:app-seq}.
\end{proof}

\section{stability of solutions}
We prove a stability estimate for measures belonging  to $\cF(X,h)$  which are strictly positive, absolutely continuous with respect to the Lebesgue measure. 
We use the following notation: the $L^p$-norms for $0<p<\infty$  are 
$$
\|\cdot\|_p := \left(\int_X |\cdot|^p \om^n\right)^\frac{1}{p} \quad\text{and}\quad \|\cdot\|_\infty := \sup_X |\cdot|.
$$

\begin{thm} \label{thm:x-stability} Assume $f,g \in L^1(X)$ and $f\om^n, g\om^n\in \cF(X,h)$. Consider two bounded $\om$-psh solutions $u,v$ of 
$$\begin{aligned}
	\om_u^n = f\om^n, \quad 
	\om_v^n = g\om^n
\end{aligned}$$
with $\sup_X u =  \sup_X u =0.$
Suppose that $f\geq c_0>0$. Fix $\ga> 2+ n(n+1)$. Then, 
$$
	\|u-v\|_{\infty} \leq C \veps
$$
provided that
$$ \|f-g\|_{1} \leq \hbar(\veps^{n+1}) \veps^\ga.$$
\end{thm}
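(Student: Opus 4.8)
The plan is to deduce the two-sided $L^\infty$ bound from Proposition~\ref{prop:l1-stability} applied twice, once to the pair $(\psi,\vphi)=(v,u)$ and once to $(\psi,\vphi)=(u,v)$, after reducing the comparison of $L^1(d\mu)$-norms to the $L^1(\om^n)$-norm $\|f-g\|_1$. Note that the function $\Ga$ in that proposition is, up to a constant and reparametrisation, the inverse of $s\mapsto s^{n(n+2)+1}\hbar(s^{n+2})$; so to get $\sup_X(v-u)\le C\veps$ it suffices, by monotonicity of $\Ga$, to show $\|(v-u)_+\|_{L^1(f\om^n)}\le \de$ with $\de=\veps^{n(n+2)+1}\hbar(\veps^{n+2})$ (and symmetrically with $g\om^n$). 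The exponent $\ga>2+n(n+1)$ and the threshold $\|f-g\|_1\le\hbar(\veps^{n+1})\veps^\ga$ in the statement are chosen precisely to make this reduction work.

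First I would control $\|(u-v)_+\|_{L^1(g\om^n)}$. Write $\int_X (u-v)_+\,g\,\om^n=\int_X (u-v)_+\,\om_v^n$. Split $X$ into the set $\{u-v\le\eta\}$, where the contribution is at most $\eta\,\mu(X)\le C\eta$, and the set $D_\eta=\{u-v>\eta\}$. On $D_\eta$ the integrand is bounded by $2\|u\|_\infty+2\|v\|_\infty\le C$ times $\om_v^n(D_\eta)$, and by the weak comparison / capacity estimates already used in the paper (the argument behind Lemma~\ref{lem:cap-level-set} and inequality \eqref{eq:cap-growth}, applied with $\psi=u$, $\vphi=v$) one gets $\om_v^n(\{u-v>\eta\})\le \eta^{-n}C\int_{\{u-v>\eta/2\}}\om_v^n\le\eta^{-n}C\,g\om^n(\{u-v>\eta/2\})$, and then iterating once more, bound this by a capacity power times a term controlled by $\|f-g\|_1$. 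More directly: since $\om_u^n=f\om^n$, $\om_v^n=g\om^n$ and $f\ge c_0>0$, on $D_\eta$ we can compare $\om_v^n$ with $\om_u^n=f\om^n$ plus the error $(g-f)\om^n$, so that $\int_{D_\eta}(u-v)_+\,g\om^n\le\int_{D_\eta}(u-v)_+\,f\om^n + C\|f-g\|_1$. The first piece, by the stability Proposition~\ref{prop:l1-stability} bootstrapping argument, is already of the right order.

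The symmetric estimate for $\|(v-u)_+\|_{L^1(f\om^n)}$ is the genuinely new point, and this is where the hypothesis $f\ge c_0>0$ enters decisively. I would write $\int_X(v-u)_+\,f\om^n=\int_X(v-u)_+\,\om_u^n$ and again split at level $\eta$; on the sublevel set $\{v-u>\eta\}$ apply the capacity growth inequality \eqref{eq:cap-growth} with the roles $\psi=v$, $\vphi=u$, which is legitimate because $\om_u^n=f\om^n\in\cF(X,h)$. This gives $\eta^n cap_\om(\{v-u>\eta\})\le C\int_{\{v-u>\eta/2\}}\om_u^n=C\int_{\{v-u>\eta/2\}}f\om^n$, and since $f\ge c_0$ one also has, crucially, $\int_{\{v-u>\eta/2\}}f\om^n\le \int_{\{v-u>\eta/2\}}g\om^n+\|f-g\|_1=\om_v^n(\{v-u>\eta/2\})+\|f-g\|_1$, and then one more application of \eqref{eq:cap-growth} to the last Monge-Amp\`ere mass in terms of $cap_\om(\{v-u>\eta/4\})$ closes a self-improving loop. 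Optimizing $\eta$ as a small power of $\veps$ (this is the role of the $\veps^{n+1}$ and the gap $\ga-(2+n(n+1))>0$) converts the $\hbar(\veps^{n+1})\veps^\ga$ bound on $\|f-g\|_1$ into $\|(v-u)_+\|_{L^1(f\om^n)}\le\de=\veps^{n(n+2)+1}\hbar(\veps^{n+2})$, so Proposition~\ref{prop:l1-stability} yields $\sup_X(v-u)\le C\veps$.

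Combining the two one-sided bounds gives $\|u-v\|_\infty=\max\{\sup_X(u-v),\sup_X(v-u)\}\le C\veps$, which is the assertion; adjusting $C$ absorbs the finitely many constants $C_n$, $\mu(X)$, $\|u\|_\infty+\|v\|_\infty$, $c_0^{-1}$, $B$ and those implicit in $\Ga$. The main obstacle is the bookkeeping in the second estimate: one must run the capacity-iteration of \eqref{eq:cap-growth} in both directions while keeping the error term $\|f-g\|_1$ separate, and then balance the auxiliary level $\eta$ against $\veps$ so that the composite function $\eta^{-n}\hbar^{-1}(\cdots)$ stays below $\de$; the inequality $\ga>2+n(n+1)$ is exactly the margin that makes this balancing possible, and verifying that the admissible ranges of $s,t$ in Lemma~\ref{lem:cap-level-set} (i.e.\ $s,t\lesssim\min\{\veps^n,\veps^3/B\}$ with the $\veps$ there being the auxiliary parameter, not the $\veps$ of the theorem) are respected for all small $\veps$ is the routine-but-delicate part.
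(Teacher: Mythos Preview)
Your strategy of reducing to Proposition~\ref{prop:l1-stability} twice does not go through as written: the ``self-improving loop'' you describe does not close. Concretely, the sublevel sets $E_\eta=\{v-u>\eta\}=\{u-v<-\eta\}$ are \emph{increasing} as $\eta$ decreases, so each pass through the loop
\[
cap_\om(E_\eta)\ \lesssim\ \eta^{-n}\!\int_{E_{\eta/2}}f\om^n\ \le\ \eta^{-n}\!\int_{E_{\eta/2}}g\om^n+\eta^{-n}\|f-g\|_1\ \le\ \eta^{-n}F_h\!\bigl(cap_\om(E_{\eta/2})\bigr)+\cdots
\]
replaces a quantity by a larger one; there is no contraction, and nothing forces the integrals to be small. (Note also that \eqref{eq:cap-growth} bounds capacity by a Monge--Amp\`ere mass, not the other way around; the reverse bound comes from the $\cF(X,h)$ hypothesis, so your ``one more application of \eqref{eq:cap-growth}'' is misattributed.) A sanity check: when $f=g$ your loop should yield $u=v$, i.e.\ uniqueness, but the iteration produces no information in that case, so the argument cannot be correct. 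Relatedly, the hypothesis $f\ge c_0$ plays no genuine role in your estimate of $\|(v-u)_+\|_{L^1(f\om^n)}$ --- the inequality $\int_E f\le\int_E g+\|f-g\|_1$ you mark as ``crucial'' holds without it.

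The paper's proof is structurally different and does not pass through Proposition~\ref{prop:l1-stability}. Following \cite{KN2}, it fixes $t_0=\inf_X(u-v)$ and a level $t_1>t_0$ with $\int_{\{u-v<t_1\}}f\om^n\le V_{min}$; the heart of the argument (Lemma~\ref{case1-l1}) then constructs a \emph{barrier}: one modifies $f$ outside $\Omega(t_1)$ to a function $\hat f$ of small total mass, solves $(\om+dd^cw)^n=\hat c\,\hat f\om^n$ via Theorem~\ref{thm:main}, and forms $\psi_s=(1-s)v+sw$. The mixed-forms inequality gives $(\om+dd^c\psi_s)^n\ge(1+\veps^\alpha)^n f\om^n$ on the good set $\Omega_1=\{f\le(1+\veps^\alpha)g\}$, while the bad set $\Omega_2$ has small $f$-mass controlled by $\|f-g\|_1$. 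The modified comparison principle applied to $u$ versus $\psi_s$ then forces $\int_{U(\tau,t)}f\om^n$ to be small, and feeding this into \eqref{eq:kappa}--\eqref{eq:cap-growth} yields a contradiction unless $t_1-t_0\le C\veps^\alpha$. The strict positivity $f\ge c_0$ enters in the ``remaining part'' from \cite{KN2}, where it is used to choose $t_1$ and control $|t_1|$. None of these ingredients --- the auxiliary Monge--Amp\`ere equation, the mixed-forms inequality, the splitting $\Omega_1\cup\Omega_2$ --- appear in your proposal, and they cannot be bypassed by capacity iteration alone.
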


We will adapt the proof of \cite[Theorem~3.1]{KN2} with necessary changes. First, it is enough to assume that  $f,g$ are smooth. 

\begin{lem} Let $f_j, g_j \in \cF(X,h)$ be smooth sequences of functions converging in $L^1(X)$ to $f, g$ respectively. Let $u_j, v_j \in PSH(\om) \cap C^\infty(X)$ be such that $u_j \searrow u$ and $v_j \searrow u$ as $j\to +\infty$. Assume $\vphi_j,\psi_j$ solve 
$$	(\om+ dd^c \vphi_j)^n= e^{\vphi_j - u_j} f_j \om^n, \quad (\om+dd^c \psi_j)^n= e^{\psi_j-v_j} g_j \om^n.
$$
Then, 
$$ \|u-v\|_{\infty} = \lim_{j\to +\infty} \|\vphi_j - \psi_j\|_{\infty}.
$$
\end{lem}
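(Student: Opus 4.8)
The plan is to establish the two-sided reduction in the limit: that the $L^\infty$-distance between $\vphi_j$ and $\psi_j$ controls and is controlled by $\|u-v\|_\infty$ up to vanishing errors. First I would verify that the auxiliary equations have solutions at all: since $f_j, g_j$ are smooth and $u_j, v_j$ are smooth, these are Monge-Amp\`ere equations with smooth strictly positive right-hand sides of the form $e^{w}F\om^n$ with $F>0$, and the existence of a unique $\vphi_j, \psi_j \in PSH(\om)\cap C^\infty(X)$ follows from the Hermitian Calabi-Yau theorem of Tosatti-Weinkove \cite{TW10b} together with the usual method of continuity / maximum principle argument that handles the exponential factor (or directly from \cite[Theorem~0.1]{KN1} applied to $\om_\bullet^n = c\,\mu$ after absorbing). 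The key point is that the constant $c$ is forced to equal $1$ precisely because of the $e^{\vphi_j-u_j}$ normalization, so no spurious constant appears.

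Next I would prove $\limsup_{j\to\infty}\|\vphi_j-\psi_j\|_\infty \le \|u-v\|_\infty$. The mechanism is a comparison/maximum-principle estimate: at a point where $\vphi_j-\psi_j$ attains its maximum $M_j$, one compares the two Monge-Amp\`ere measures and uses that $\om_{\vphi_j}^n = e^{\vphi_j-u_j}f_j\om^n$ while $\om_{\psi_j}^n=e^{\psi_j-v_j}g_j\om^n$; an application of the comparison principle on the set $\{\vphi_j < \psi_j + M_j - \eta\}$, combined with the fact that $f_j, g_j$ are close in $L^1$ and $u_j - v_j$ is close to $u-v$, yields $M_j \le \|u_j - v_j\|_\infty + o(1)$, and symmetrically for the minimum. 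Since $u_j\searrow u$ and $v_j\searrow v$ with $u,v$ bounded, Dini's theorem gives $\|u_j - v_j\|_\infty \to \|u-v\|_\infty$ — wait, more carefully, $u_j - v_j$ need not converge uniformly, but $u_j \to u$ and $v_j\to v$ in $L^1$ together with monotonicity and continuity of the limits does give $\|u_j\|_\infty, \|v_j\|_\infty$ bounded and one can pass to the limit in the relevant integral/capacity estimates rather than needing uniform convergence of the difference. The reverse inequality $\liminf_{j}\|\vphi_j-\psi_j\|_\infty \ge \|u-v\|_\infty$ comes from showing $\vphi_j \to u$ and $\psi_j \to v$ uniformly: indeed $\vphi_j$ solves $\om_{\vphi_j}^n = e^{\vphi_j - u_j}f_j\om^n$, and since $f_j \to f$ in $L^1$ with $f_j\om^n\in\cF(X,h)$ uniformly and $u_j\to u$, the stability result Proposition~\ref{prop:l1-stability} (applied with the roles of the comparison function and the solution) forces $\|\vphi_j - u_j\|_\infty \to 0$, hence $\vphi_j\to u$ uniformly, and likewise $\psi_j\to v$.

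The main obstacle I anticipate is the control of the exponential coupling term $e^{\vphi_j - u_j}$ uniformly in $j$: one needs an a priori $L^\infty$ bound on $\vphi_j$ independent of $j$ before the stability estimates can be invoked, and this requires knowing that $e^{\vphi_j-u_j}f_j\om^n$ stays uniformly dominated by capacity in the sense of \eqref{eq:dominate}. This is where the uniform bound $\|u_j\|_\infty \le C$ (from $u_j\searrow u$ bounded) enters: it gives $e^{-C} \le e^{\vphi_j - u_j}e^{-\vphi_j}\cdot e^{\vphi_j}$... more precisely, combined with a uniform upper bound $\sup_X \vphi_j \le C'$ obtained from the normalization $\int_X e^{\vphi_j-u_j}f_j\om^n = \int_X\om_{\vphi_j}^n$ being comparable to $\int_X\om^n$ plus lower-order terms (the Hermitian correction, controlled as in \cite{KN1}), one gets that $e^{\vphi_j-u_j}f_j\om^n \le C'' f_j\om^n$, which lies in $\cF(X,h_0)$ uniformly by Lemma~\ref{lem:app-seq}-type reasoning. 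Once that uniform domination is in hand, Proposition~\ref{barrier-funct} and \cite[Corollary~5.6]{KN1} give the uniform $L^\infty$ bound, and the rest of the argument is the routine application of pluripotential stability. I would organize the write-up so that the uniform bound is established first as a separate claim, then the two inequalities, and finally conclude by squeezing.
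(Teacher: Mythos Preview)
Your outline splits the task into a $\limsup$ bound via a maximum-principle comparison and a $\liminf$ bound via uniform convergence $\vphi_j\to u$, $\psi_j\to v$. The second half is the whole argument: once $\vphi_j\to u$ and $\psi_j\to v$ in $C^0(X)$, the equality $\|\vphi_j-\psi_j\|_\infty\to\|u-v\|_\infty$ is immediate, and the separate $\limsup$ step is redundant. Moreover, your $\limsup$ argument has a gap: at a maximum point $x_0$ of $\vphi_j-\psi_j$ the pointwise inequality $\om_{\vphi_j}^n\le\om_{\psi_j}^n$ yields only $\vphi_j(x_0)-\psi_j(x_0)\le u_j(x_0)-v_j(x_0)+\log\bigl(g_j(x_0)/f_j(x_0)\bigr)$, and the last term is not controlled by $\|f_j-g_j\|_1$.

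The more serious gap is in your route to $\vphi_j\to u$ uniformly. Proposition~\ref{prop:l1-stability} bounds $\sup_X(\psi-\vphi)$ by $\Ga\bigl(\|(\psi-\vphi)_+\|_{L^1(d\mu)}\bigr)$ with $\mu=\om_\vphi^n$; to invoke it with $\vphi=\vphi_j$ and $\psi=u$ (or $u_j$) you would need to know in advance that $\|(u-\vphi_j)_+\|_{L^1(\om_{\vphi_j}^n)}\to 0$, which is precisely what is in question. The paper avoids this circularity by a simple rewriting that you miss: set $\mu_j:=e^{-u_j}f_j\,\om^n$, so that the equation becomes $(\om+dd^c\vphi_j)^n=e^{\vphi_j}\mu_j$. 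The $\mu_j$ are smooth, lie in $\cF(X,h)$ uniformly (since $\|u_j\|_\infty$ is bounded), and converge weakly to $\mu:=e^{-u}f\,\om^n\in\cF(X,h)$. Corollary~\ref{cor:good-stability} then gives directly that $\vphi_j$ converges uniformly to the unique continuous solution $u_0$ of $(\om+dd^cu_0)^n=e^{u_0}\mu$. Since $u$ itself satisfies $(\om+dd^cu)^n=f\om^n=e^{u}\cdot e^{-u}f\,\om^n=e^u\mu$, uniqueness forces $u_0=u$. The same applies to $\psi_j\to v$, and the lemma follows. Your a~priori bound discussion is correct in spirit, but it is already packaged inside Corollary~\ref{cor:good-stability}; the missing idea is the rewriting that turns the problem into one for the equation $\om_v^n=e^v\mu$, where uniqueness is available.
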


\begin{proof} We use the argument of \cite[Remark~3.11]{KN2} pointed out by a referee of that paper. By Corollary~\ref{cor:good-stability} the sequence $\{\vphi_j\}$ converges uniformly to the solution $u_0$ of $$(\om+ dd^c u_0)^n = e^{u_0} \left(e^{-u} f \right) \om^n.$$ It follows from the uniqueness of $u$ that $u_0= u$. Similarly, $\{\psi_j\}$ converges uniformly to $v$. The conclusion follows.
\end{proof}

\begin{proof}[Proof of Theorem~\ref{thm:x-stability}]
We fix the notation as in the proof of \cite[Theorem~3.1]{KN2}. For $t\in \bR$ define
\[\notag
	\varphi = u-v, \quad \Omega(t) = \{\varphi<t\}, \quad t_0 = \inf_X \varphi.
\]
We need to replace \cite[Lemma~3.4]{KN2} by the following statement.  The proof is similar up to some technicalities. For the reader's convenience we give  all details here.

\begin{lem}
\label{case1-l1}
Let $V_{min}>0$ be the constant in Proposition~\ref{barrier-funct}. Fix $t_1 > t_0$.
Assume that for $0<\varepsilon <<1$,
\[\notag
	\|f-g\|_1 \leq \ell(\veps) \veps,
\]
where $\ell(\veps)=\hbar(\veps^{(n+1)\al})$. If $\int_{\Omega(t_1)} f \omega^n \leq V_{min}$, then 
$$
	t_1 - t_0 \leq C \varepsilon^\alpha 
$$
where $0< \alpha <\frac{1}{2+ n(n+1)}$ is fixed.
\end{lem}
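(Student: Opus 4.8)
The plan is to adapt the proof of \cite[Lemma~3.4]{KN2}, carrying the dependence on the admissible weight $h$ through the functions $\ka$ and $\hbar$ of Theorem~\ref{thm:kappa}. By the previous lemma we may assume $f,g$ are smooth; we also take $t_1$ to be a regular value of $\vphi=u-v$, so that $\om^n(\d\Om(t_1))=0$ and no boundary contributions occur below. Since $f\ge c_0>0$ and $\Om(t_1)\ne\emptyset$, the measure $\nu:=\mathbf 1_{\Om(t_1)}\om_u^n=\mathbf 1_{\Om(t_1)}f\om^n$ satisfies $0<\nu(X)\le V_{min}$ and, being dominated by $f\om^n$, lies in $\cF(X,h)$. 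By \cite[Theorem~0.1]{KN1} there are $\rho\in PSH(\om)\cap C^0(X)$ and $c>0$ with $\om_\rho^n=c\,\nu$, $\sup_X\rho=0$; as $\nu(X)\le 2V_{min}$, Proposition~\ref{barrier-funct} forces $c\ge 2^n$, while the $L^\infty$ estimate of \cite{Ko96,KN1} (depending only on $\nu(X)\le V_{min}$, $h$, $X$, $\om$) gives $-M\le\rho\le 0$ with $M=M(h,X,\om)$. This barrier $\rho$, together with the extra factor $c\ge 2^n$, is what replaces the bare threshold $V_{min}$ of the K\"ahler argument and lets one beat the break-even produced by a naive use of the comparison principle.

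Next I would run the capacity iteration on the sublevel sets $\Om(t)=\{\vphi<t\}$, $t_0<t<t_1$. For increments $\tau$ in the admissible range the growth inequality \eqref{eq:cap-growth} (applied to $u$ and a suitable normalization of $v$) gives $\tau^n\,cap_\om(\Om(t))\le C\int_{\Om(t+\tau)}\om_u^n$; replacing $\om_u^n=f\om^n$ by $\om_v^n=g\om^n$ on $\Om(t+\tau)$ costs at most $C\|f-g\|_1$, and the comparison principle for $u$ and $v$ on nested sublevel sets---in its Hermitian form, the correction terms being controlled by $B$ and absorbed thanks to $c\ge 2^n$ and $\rho$---relates $\int_{\Om(t+\tau)}\om_v^n$ to the mass of $\om_u^n$ over the slightly larger set $\Om(t+2\tau)$. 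Since $f\om^n\in\cF(X,h)$, that mass is in turn $\le F_h\big(cap_\om(\Om(t+2\tau))\big)$, which is sublinear in the capacity near $0$; combined with the lower bound $\hbar(\tau^{n+1})\le cap_\om(\Om(t))$ coming from \eqref{eq:kappa}, this yields a recursive inequality between $cap_\om(\Om(t))$ and $cap_\om(\Om(t+2\tau))$. Iterating over a chain of levels between $t_0$ and $t_1$, exactly as in the $L^\infty$ estimate of \cite{K98,KN1} and in \cite[Lemma~3.4]{KN2}, produces a bound of the form
\[\notag
	t_1-t_0\ \le\ C\,\tau\ +\ C\,\tau^{-n(n+1)}\,\hbar(\tau^{n+1})^{-1}\,\|f-g\|_1 .
\]

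Finally I would set $\tau\asymp\veps^{\al}$ and use the hypothesis $\|f-g\|_1\le \hbar(\veps^{(n+1)\al})\,\veps=\ell(\veps)\,\veps$ together with the monotonicity of $\hbar$: the second term above is then $\le C\veps^{\,1-n(n+1)\al}$, hence $\le C\veps^{\al}$ because $\al<\tfrac1{2+n(n+1)}<\tfrac1{1+n(n+1)}$, while the first term is obviously $\le C\veps^\al$; the same restriction on $\al$ keeps $\tau$ inside the admissible range and makes the iteration consistent. The bookkeeping of the Hermitian corrections is the main obstacle: on a K\"ahler manifold the comparison principle and the fact that the total Monge--Amp\`ere mass is independent of the potential are exact, whereas here each step produces a capacity error, and one has to verify that the factor $c\ge 2^n$ of Proposition~\ref{barrier-funct} and the smallness of $\veps$ suffice to dominate these errors; checking $\om$-plurisubharmonicity of the intermediate competitors and the numerology of the iteration is then routine.
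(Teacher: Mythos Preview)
Your barrier $\rho$ with $\om_\rho^n=c\,\nu$ and $c\ge 2^n$ is exactly the object the paper constructs (there called $w$, with right-hand side $\hat c\hat f\om^n$), but you never actually insert it into the comparison. In the paper $w$ enters through the convex combination $\psi_s=(1-s)v+sw$: by the mixed forms inequality this competitor satisfies $(\om+dd^c\psi_s)^n\ge\big[(1-s)(g/f)^{1/n}+2s\big]^n f\om^n$ on $\Om(t_1)$, and on the set $\Om_1=\{z\in\Om(t_1):f(z)\le(1+\veps^\al)g(z)\}$ the bracket exceeds $1+\veps^\al$ once $s=2\veps^\al$. It is precisely this strict multiplicative gain over $\om_u^n$ that beats the Hermitian correction factor $1+Ct/\tau^n$ in the modified comparison principle applied to $u$ and $(1-\tau)\psi_s$. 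The complementary set $\Om_2$ is handled additively via $\int_{\Om_2}f\om^n\le 2\ell(\veps)\veps^{1-\al}$, which follows from $g<\veps^{-\al}(f-g)$ there. You omit both the $\Om_1/\Om_2$ split and the convex combination, so the factor $c\ge 2^n$ you obtained never meets the error terms it is meant to absorb.

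Your iteration step does not close as written. After $\int_{\Om(t+\tau)}\om_u^n\le\int_{\Om(t+\tau)}\om_v^n+\|f-g\|_1$, you invoke the comparison principle to pass from $\int_{\Om(t+\tau)}\om_v^n$ to $\int_{\Om(t+2\tau)}\om_u^n$. But the Hermitian comparison principle bounds $\int_E\om_v^n$ by $(1+\text{err})\int_E\om_u^n$ on the \emph{same} sublevel set $E=\{u<v+\text{const}\}$; it does not enlarge the set, so the chain is circular and your displayed bound on $t_1-t_0$ is not established. (Also, \eqref{eq:cap-growth} is stated for the sets $U(\veps,s)=\{\vphi<(1-\veps)\psi+m(\veps)+s\}$ with $0<\veps<1$, not for $\Om(t)=\{u<v+t\}$; the required $\veps>0$ is exactly what forces one to bring in a third function.) The paper's argument is not an iteration over a chain of levels at all: it is a single application of the modified comparison principle between $u$ and $(1-\tau)\psi_s$, followed by one use of \eqref{eq:kappa} at the scale $t=\veps^{(n+1)\al}$, producing the contradiction $1\le C\veps^{1-(2+n(n+1))\al}$.
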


\begin{proof} Define the sets:
\[\notag
\Omega_1 := \{z\in \Omega(t_1): f (z) \leq (1+ \varepsilon^\alpha) g (z)\} \quad
\mbox{and} \quad \Omega_2:= \Omega(t_1)\setminus \Omega_1.
\]
Since $g < \varepsilon^{-\alpha} (f-g)$ on $\Omega_2$, we have
\begin{equation}
\label{c1-l1-eq4}
\begin{aligned}
	\int_{\Omega_2} f \omega^n
&	\leq 		\int_{\Omega_2} |f-g| \omega^n + \int_{\Omega_2} g \omega^n \\
&	\leq		\ell(\veps)\veps + \ell(\veps)\veps^{1-\al} \\
&	\leq 2 \ell (\veps) \veps^{1-\alpha}.
\end{aligned}
\end{equation}
It follows that 
\begin{align*}
	\int_{\Omega(t_1)} f \omega^n
	=	\int_{\Omega_1} f \omega^n + \int_{\Omega_2} f \omega^n
	\leq	\int_{\Omega_1} f \omega^n+ 2 \ell (\veps) \veps^{1-\alpha}
	\leq V_{min} + 2 \ell (\veps) \veps^{1-\alpha}.
\end{align*}
Next, we  construct a barrier function by putting
\begin{equation} \label{c1-l1-eq6}
\hat f (z)  =	\begin{cases}
		f (z)\quad &\mbox{for } z\in \Omega(t_1), \\
		\frac{1}{A}f(z)\quad &\mbox{for }  z\in X \setminus \Omega(t_1).
	\end{cases}
\end{equation}
As $\int_{\Omega(t_1)} f\omega^n \leq V_{min}$ we can choose $A>1$ large enough so that 
\[\notag
	\int_X \hat f \omega^n\leq \frac{3}{2} V_{min} .
\]
Notice that $f/A \leq \hat f \leq f$. By Theorem~\ref{thm:main} we find $w \in PSH(\omega) \cap C(X)$ and $\hat c>0$ satisfying
\[\notag
	(\omega + dd^c w)^n = \hat c \hat f \omega^n, \quad \sup_X w =0 .
\]
By  Proposition~\ref{barrier-funct} applied for $f,h$ we have 
\begin{equation}\label{c1-l1-eq9}
	2^n \leq \hat c \leq A ,
\end{equation}
where the last inequality follows from \eqref{c1-l1-eq6} and \cite[Lemma~2.1]{KN2}.
Hence, 
\begin{equation}\label{c1-l1-eq10}
	\hat c\hat f \geq 2^n f \quad \mbox{on } \Omega(t_1).
\end{equation}
Define for $0< s <1$, 
$
	\psi_s = (1-s) v + s w.
$
It follows from the mixed forms type inequality (\cite[Lemma~2.2]{KN2}) that 
\begin{equation*}
\begin{aligned}
	(\omega + dd^c \psi_s)^n 
&	\geq 		\left[ (1-s) g^\frac{1}{n} + s (\hat c \hat f/f)^\frac{1}{n}\right]^n \omega^n \\
&	=[(1-s) (g/f)^\frac{1}{n} + s(\hat c \hat f/f)^\frac{1}{n} ]^n f\omega^n \\
&	=: [b(s)]^n f\omega^n. 
\end{aligned}
\end{equation*}
Therefore  on $\Omega_1$  we have
\[\notag
	b(s) \geq \frac{(1-s)}{(1+\varepsilon^\alpha)^\frac{1}{n}} + 2 s 
	\geq \frac{1-s}{1+\varepsilon^\alpha} + 2s.
\]
If $2 \varepsilon^\alpha \leq  s \leq 1$, then 
\begin{equation}\label{c1-l1-eq14}
	b(s) \geq 1 + \varepsilon^\alpha \quad \mbox{on } \Omega_1. 
\end{equation}
Let us use the notation
$
	m_s:= \inf_X (u - \psi_s) = \inf_X \{u-v + s(v - w)\} .
$
Then,
\[\notag
	m_s \leq t_0 + s \|w\|_\infty .
\]
Set for $0<\tau<1$, 
$
	m_s(\tau) := \inf_X [u - (1-\tau)\psi_s] .
$
Then
$
	m_s(\tau) \leq m_s.
$
By the above definitions we have
\begin{equation}\label{c1-l1-eq19}
\begin{aligned}
U(\tau, t)
& :=	\{u< (1-\tau)\psi_s + m_s(\tau) + t \}  \\
&	\subset 	\{u < \psi_s + m_s + \tau \|\psi_s\|_\infty + t\} \\
&	\subset	\{u<v + t_0 + s(\|v\|_\infty + \|w\|_\infty) + \tau \|\psi_s\|_\infty+ t \} .
\end{aligned}
\end{equation}
We are going to show that
\begin{equation}\label{c1-l1-eq20}
	t_1 - t_0 \leq 2s(\|v\|_\infty + \|w\|_\infty) + \tau \|\psi_s\|_\infty,
\end{equation}
for $s= 2\varepsilon^\alpha$ and $\tau = \varepsilon^\alpha/2$.  Suppose  it was false. 
By \eqref{c1-l1-eq19} we have
\begin{align*}
	U(\tau, t)  \subset \subset	\{u< v + t_0 + (t_1-t_0)\} = \Omega(t_1), 
\end{align*}
for $0<t < \frac{t_1 - t_0}{2}$. 
To go further we need to estimate the integrals:
\[\notag
	\int_{U(\tau,t)} f\omega^n
\]
for $0< t << s, \tau$.
By the modified comparison principle \cite[Theorem 0.2]{KN1} 
\[\notag
	\int_{U(\tau, t)} \omega_ {(1-\tau) \psi_s}^n 
\leq \left(1 + \frac{C t}{\tau^n}\right) \int_{U(\tau, t)} \omega_u^n,
\]
for every $0< t < \min\{\frac{\tau^3}{16B}, \frac{t_1-t_0}{2}\}$. Hence, a simple estimate from below gives 
\[\notag
	(1-\tau)^n \int_{U(\tau,t)} \omega_{\psi_s}^n \leq 
\left(1 + \frac{C t}{\tau^n}\right) \int_{U(\tau, t)} \omega_u^n.
\]
Using \eqref{c1-l1-eq14} for $s=2\varepsilon^\alpha$ we get
\begin{equation}\label{c1-l1-eq25}
	(1-\tau)^n (1+\varepsilon^\alpha)^n \int_{U(\tau, t)\cap \Omega_1} f \omega^n
\leq 	\left(1 + \frac{C t}{\tau^n}\right) \int_{U(\tau, t)} f \omega^n.
\end{equation}
If we write $a(\varepsilon, \tau) = (1-\tau)^n (1+\varepsilon^\alpha)^n$, then
\[\notag
	a(\varepsilon, \tau) = (1+\varepsilon^\alpha/2 - \varepsilon^{2\alpha}/2)^n
	>1 + \varepsilon^\alpha/4
\]
as we have $\tau = \varepsilon^\alpha/2$ and $0<\varepsilon^\alpha<1/4$. Therefore  \eqref{c1-l1-eq25} implies that
\[\notag
	\left[ a (\varepsilon, \tau) - \left(1 + \frac{2^nC t}{\varepsilon^{n\alpha}}\right) 
	\right] \int_{U(\tau,t)\cap \Omega_1} f \omega^n 
\leq  		\left(1 + \frac{2^nC t}{\varepsilon^{n\alpha}}\right)  
		\int_{\Omega_2} f \omega^n .
\]
Thus  for $0< t \leq \varepsilon^{(n+1)\alpha}/2^{n+3}C$,
\[\notag
	\frac{\varepsilon^\alpha}{8} \int_{U(\tau,t)\cap \Omega_1} f \omega^n 
\leq		2 \int_{\Omega_2} f \omega^n 
\leq 		4 \ell (\veps) \veps^{1-\alpha},
\]
where the last inequality used \eqref{c1-l1-eq4}. Hence,
\begin{equation*}
	 \int_{U(\tau,t)\cap \Omega_1} f  \omega^n \leq 32 \, \ell (\veps) \veps^{1-2\alpha}.
\end{equation*}
Altogether we get that for $0< t \leq \varepsilon^{(n+1)\alpha}/C$, 
\begin{equation}\label{c1-l1-eq30}
\int_{U(\tau, t)} f \omega^n 	
\leq	\int_{U(\tau,t)\cap \Omega_1} f  \omega^n +
	\int_{\Omega_2} f \omega^n
\leq 	C \ell (\veps) \veps^{1-2\alpha} .
\end{equation}
This is the estimate  we need. 

Now we are able  make use of the results from \cite{KN1} recalled above. 
First, it follows from \eqref{eq:kappa} and \eqref{eq:cap-growth} that for $0< t \leq \varepsilon^{(n+1)\alpha}/C$, 
\begin{equation}\label{c1-l1-eq31}
	\hbar (t/2) \leq cap_\omega(U(\tau,t/2)) \leq \frac{2^nC}{t^n} \int_{U(\tau, t)} f\omega^n,
\end{equation}
where $\hbar(t)$ is the inverse of $\kappa(t)$. 
It follows from \eqref{c1-l1-eq30} and \eqref{c1-l1-eq31} that 
\[\notag
	\hbar(t) \leq \frac{C\ell (\veps) \veps^{1-2\alpha}}{t^n} .
\]
Then, taking $t =\varepsilon^{(n+1)\alpha}$ we obtain that
\[\notag
	\hbar (\varepsilon^{(n+1)\alpha}) \leq C \hbar(\veps^{(n+1)\al})\varepsilon^{1-2\alpha - n(n+1)\alpha}. 
\]
Equivalently, $1 \leq C \veps^{1-2\alpha - n(n+1)\alpha}$. 
However,  we have that
$
	 1 - [n(n+1)+2] \alpha>0,
$
which leads to a contradiction 
for $\varepsilon>0$ small enough. 

Thus we have proved that 
\[\notag
	t_1 - t_0 \leq 4 \varepsilon^{\alpha} (\|v\|_\infty + \|w\|_\infty + \|\psi_s\|_\infty),
\]
for a fixed $0< \alpha < \frac{1}{2 + n(n+1)}$. The  norms on the right hand side are  controlled by $\|f\|_1, \|g\|_1, h, V_{min}$. So the  lemma follows by rewriting $\ga= 1/\al$ and $\veps:= \veps^{1/\al}$.
\end{proof}

Thanks to the above lemma, the remaining part of the proof of  \cite[Theorem~3.1]{KN2} is used to conclude that of Theorem~\ref{thm:x-stability}. 
\end{proof}

\section{The Dinh-Nguyen theorem on Hermitian manifolds}

In this section we give a characterization of measures leading to H\"older continuous solutions of the  Monge-Amp\`ere equation on compact Hermitian manifolds, which
is an analogue  of the Dinh-Nguyen theorem \cite{DN14}. If $\om$ is K\"ahler, \cite{DN14}  says that a positive Radon measure admits a H\"older continuous $\om$-psh potential if and only if the associated functional is H\"older continuous on $\{w \in PSH(\om): \sup_X v=0\}$ with respect to the $L^1$-distance.  
Let us denote
\[\notag
	\cS= \cS(\om) := \left\{ u\in PSH(\om): -1\leq u\leq 0, \;\sup_X u=0\right\}.
\]
The  $L^1$-distance, with respect to the Lebesgue measure,   between  $u,v \in PSH(\om)$  is given by
\[
	\| u-v \|_{L^1} := \int_X |u-v | \om^n.
\]
A measure $\mu$ gives the natural functional $\hat\mu: PSH(\om) \to \bR$ defined by
$$	\hat{\mu} (v)= \int_X v d\mu.
$$
Following Dinh-Nguyen \cite{DN14} 
we say that 
\begin{defn}
 $\hat{\mu }$ is H\"older continuous on $\cS$ if it is  H\"older continuous  with respect to the $L^1$ distance.
\end{defn}
In other words there exist a uniform exponent $\al>0$ and a uniform constant $C>0$ such that  for every $u,v \in \cS$,
\[
	|\hat{\mu }(u -v)| = \left| \int_X (u-v) d\mu \right| \leq C \|u-v\|_{L^1}^\al.
\] 
Since $\max\{u,v\}\in \cS$ for every $u, v\in \cS$, this inequality is equivalent to
\[ \label{eq:global-h}
	\int_X |u-v| d\mu \leq C \|u-v\|_{L^1}^\al \quad \forall u,v \in \cS.
\]

We are going to show that the H\"older continuity property on $\cS$ is  local. Let $\Om$ be  a strictly pseudoconvex domain  in $\bC^n$ and define
\[
	 \cS_0(\Om) := \{v\in PSH(\Om): -1\leq v\leq 0\}.
\]
The $L^1$ distance (with respect to the Lebesgue measure) between $\vphi, \psi \in \cS_0$ is defined similarly:
$$
	 \|\vphi-\psi\|_{L^1(\Om)} = \int_{\Om} |\vphi-\psi| dV_{2n}. 
$$
Let $\nu$ be a positive Borel measure on $\Om$. It also gives a natural functional $\hat{\nu}$ on $ PSH(\Om)$ defined by
$$
	\hat{\nu }(\vphi)= \int_\Om \vphi d\nu. 
$$ 
\begin{defn}
	$\hat{\nu }$ is locally H\"older continuous on $\cS_0(\Om)$   if for a fixed $\Om' \subset \subset \Om$, there exists a constant $C = C(\Om',\Om)>0$ and an exponent $\al>0$ such that for every $\vphi, \psi \in \cS_0(\Om)$
	\[\label{eq:local-h}
	\int_{\Om'} |\vphi- \psi| d\nu  \leq C \|\vphi-\psi\|_{L^1(\Om)}^\al.
	\]
\end{defn}
 
\begin{lem}\label{lem:equiv-loc-glo} Let $\mu$ be a positive Borel measure on $X$. Then,  $\hat{\mu }$ is H\"older continuous on $\cS$ if and only if it is locally H\"older continuous on every  local coordinate chart. 
\end{lem}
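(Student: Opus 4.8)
The plan is to prove the equivalence in both directions, with the local-to-global direction being essentially a partition-of-unity argument and the global-to-local direction requiring a gluing construction that turns a locally bounded psh function into a global $\om$-psh function in the class $\cS$ (up to normalization).

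\medskip

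\emph{From global to local.} Suppose $\hat\mu$ is H\"older continuous on $\cS$ with exponent $\al$ and constant $C$. Fix a coordinate chart $B_j(3s) \supset B_j(2s) \supset B_j(s)$ from the cover \eqref{eq:cover} and take $\Om' \subset\subset \Om$ inside it; without loss of generality $\Om = B_j(3s)$, $\Om' = B_j(s)$. Given $\vphi,\psi \in \cS_0(\Om)$, I want to extend them (after scaling) to elements of $\cS$ so that the global H\"older inequality \eqref{eq:global-h} controls $\int_{\Om'}|\vphi-\psi|\,d\mu$. The key is the standard gluing: fix a smooth $\rho\in PSH(\om)$-type function, more precisely a bounded function $g$ on $B_j(3s)$ with $dd^c g \geq $ (the local expression of) $\om$ and such that $g$ is very negative near $\d B_j(2s)$ and comparatively large on $B_j(s)$; then for a small constant $a>0$ the function $\max\{a\vphi + g,\ -M\}$ (suitably normalized) agrees with $a\vphi + g$ on $B_j(s)$, agrees with the fixed background potential outside $B_j(2s)$, and defines a global $\om$-psh function. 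After rescaling to land in $[-1,0]$ with supremum $0$, I apply \eqref{eq:global-h} to the two extensions of $\vphi$ and $\psi$. On $\Om'$ the difference of the extensions equals $a(\vphi-\psi)$, while the global $L^1$-distance of the extensions is bounded by a constant times $\|\vphi-\psi\|_{L^1(\Om)}$ plus the measure of the region where the two $\max$'s behave differently; the latter is itself controlled by $\|\vphi-\psi\|_{L^1(\Om)}$ by a standard Chebyshev-type estimate on sublevel sets of psh functions. This yields \eqref{eq:local-h} with the same exponent $\al$ and an adjusted constant.

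\medskip

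\emph{From local to global.} Suppose $\hat\mu$ is locally H\"older continuous on every coordinate chart of the finite cover. Let $u,v\in\cS$ and write $\mu = \sum_j \chi_j\mu$ with $\{\chi_j\}$ the partition of unity from \eqref{eq:cover}. On each $B_j(3s)$, in local coordinates the restrictions $u|_{B_j(3s)}$ and $v|_{B_j(3s)}$ are not plurisubharmonic but become so after adding a fixed smooth local potential $\vrh_j$ with $dd^c\vrh_j\geq \om$; since $u,v\in[-1,0]$ and $\vrh_j$ is bounded, after an affine rescaling $u + \vrh_j$ and $v+\vrh_j$ lie in $\cS_0(B_j(3s))$ (up to a fixed multiplicative constant depending only on $\sup|\vrh_j|$). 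Applying \eqref{eq:local-h} on $\Om' = B_j(s) \supset \supp\chi_j$ gives
\[\notag
	\int_X \chi_j |u-v|\, d\mu \leq \int_{B_j(s)} |u-v|\, d\mu \leq C_j \|u-v\|_{L^1(B_j(3s))}^{\al_j} \leq C_j \|u-v\|_{L^1}^{\al_j}.
\]
Summing over the finitely many $j$ and taking the worst exponent $\al = \min_j \al_j$ (legitimate because $\|u-v\|_{L^1}\leq \mathrm{diam}$ is bounded, so smaller powers dominate) yields \eqref{eq:global-h}. Note the local H\"older inequality must be applied with $\Om'$ a neighborhood of $\supp\chi_j$ that is relatively compact in $B_j(3s)$, which is exactly why the cover was chosen with the threefold nesting $B_j(s)\subset B_j(2s)\subset B_j(3s)$.

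\medskip

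\emph{Main obstacle.} The routine direction is local-to-global; the substantive work is the gluing in the global-to-local direction, and the one point needing care is showing that the $L^1$-distance between the two glued global functions is bounded by $C\|\vphi-\psi\|_{L^1(\Om)}$ rather than just by a constant. The region of discrepancy is contained in $\{a\vphi + g > -M\}\triangle\{a\psi+g>-M\}$, and one estimates its Lebesgue measure by the fact that $\{|\vphi-\psi|>\lambda\}$ has volume $O(\lambda^{-1}\|\vphi-\psi\|_{L^1(\Om)})$ together with the local boundedness of $|\{g > -M-\eta\}\setminus\{g>-M\}|$ in $\eta$, which holds because sublevel sets of a fixed bounded psh function have volume shrinking at a controlled rate. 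Choosing the threshold $\lambda$ as a small power of $\|\vphi-\psi\|_{L^1(\Om)}$ and absorbing it changes the exponent $\al$ by a harmless factor, so the H\"older property is preserved (possibly with a smaller exponent), which is all that is claimed.
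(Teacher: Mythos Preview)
Your local-to-global direction is exactly the paper's argument: add a local potential $\rho$ with $dd^c\rho\geq\om$ to turn $u,v\in\cS$ into elements of $\cS_0$ on each chart, apply the local inequality, and sum over the finite cover.

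In the global-to-local direction your gluing idea is also the paper's, but you have manufactured a difficulty that is not there. The ``main obstacle'' you describe --- controlling the $L^1$-distance of the glued functions on the region where the two maxima disagree --- vanishes once you use the elementary pointwise bound
\[
\bigl|\max\{f,\chi\}-\max\{g,\chi\}\bigr|\ \leq\ |f-g|,
\]
valid because $t\mapsto\max\{t,c\}$ is $1$-Lipschitz. Concretely, the paper fixes an $\om$-psh function $\chi$ on $X$ with $\chi=0$ outside $U$ and $\chi\leq -3\de$ on $B(a,r)$, sets $\tilde\vphi=\max\{\de\vphi-\de,\chi\}$ on $U$ and $\tilde\vphi=\chi$ elsewhere (similarly $\tilde\psi$), and then simply observes $|\tilde\vphi-\tilde\psi|\leq \de|\vphi-\psi|$ everywhere on $U$ and $\tilde\vphi=\tilde\psi$ off $U$. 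This gives immediately
\[
\de\int_{B(a,r)}|\vphi-\psi|\,d\mu\ \leq\ \int_X|\tilde\vphi-\tilde\psi|\,d\mu\ \leq\ C\Bigl(\int_U|\tilde\vphi-\tilde\psi|\,\om^n\Bigr)^{\al}\ \leq\ C\de^{\al}\|\vphi-\psi\|_{L^1(\Om)}^{\al},
\]
so the H\"older exponent is preserved and the constant becomes $C\de^{\al-1}$.

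Your Chebyshev/threshold argument is therefore unnecessary; as written it is also only a sketch (the control of the symmetric-difference set and the interplay between the threshold $\lambda$ and the level-set volume of $g$ are asserted rather than proved), and it would degrade the H\"older exponent for no reason. Replace that whole paragraph by the one-line Lipschitz bound above and your proof becomes essentially identical to the paper's.
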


\begin{proof} Suppose that $\hat{\mu}$ is locally H\"older continuous on each local coordinate chart. Let $u,v \in \cS$. We wish to show that there exist $C,\al>0$ such that
\[\notag
	\int_X |u-v| d\mu \leq C \|u-v\|_{L^1}^\al.
\] 
Let $B (a,r)$  be a local coordinate ball in the finite covering \eqref{eq:cover}. Let $\rho$ be a strictly plurisubharmonic function on $U:=B(a,2r)$ such that $dd^c \rho \geq \om$. Define  $\vphi:= u+\rho, \psi:= v+\rho.$ By local H\"older continuity of 
$\hat{\mu}$ we have
\[\notag
	\int_{B(a,r)} |u-v| d\mu = \int_{B(a,r)} |\vphi-\psi| d\mu \leq C \|\vphi - \psi \|^\al_{L^1(U)}  \leq C \left(\int_X |u-v| \om^n \right)^\al.
\]
Summing up over all  $j\in J$ of the cover, we get that $\hat{\mu}$ is H\"older continuous on $\cS$. 

For the reverse direction, assume now  that $\hat{\mu}$ is  H\"older continuous $\cS$. Let $B(a,r), U$ be the coordinate balls  above. Take $\vphi, \psi \in \cS_0(U)$. Let $\chi$ be a $\om$-psh function on $X$ such that $\chi =0$ outside $U$ and $\chi\leq -3\de$ on $B(a,r)$ for some $0<\de <1/2$. Define
\[\notag
	\tilde \vphi = \begin{cases} 
		\max\{\de \vphi - \de, \chi\} \quad&\mbox{on } U, \\
		\chi	\quad &\mbox{on } X\setminus U,
	\end{cases}
\]
and $\tilde \psi$ analogously. Then, using the assumption
\[\notag	
	\de \int_{B(a,r)} |\vphi - \psi| d\mu \leq \int_X |\tilde \vphi - \tilde \psi| d\mu \leq C \left( \int_X |\tilde \vphi - \tilde \psi| \om^n\right)^\al  = C \left( \int_U |\tilde \vphi - \tilde \psi| \om^n\right)^\al. 
\]
Note that on $U$ we have $|\tilde \vphi - \tilde \psi| \leq \de |\vphi - \psi|$. It follows that
$$
	\int_{B(a,r)} |\vphi -\psi| d\mu \leq \frac{C}{\de^{1-\al}}  \left( \int_U  |\vphi - \psi| \om^n\right)^\al .
$$
This is the local H\"older continuous property  of $\hat{\mu}$ on $U$.
\end{proof}

There are plenty of examples of measures  which are locally H\"older continuous on $\cS_0(\Om)$ (see \cite{N17,N18}).  We give below a sufficient condition. Let us consider the class
$$ \cE_0'(\Om) = \left\{ v\in PSH \cap L^\infty (\Om): \lim_{z\to \d\Om} v(z) =0, \int_{\Om}(dd^cv)^n \leq 1\right\}.
$$
Then, the H\"older continuity of a functional on $\cE_0'(\Om)$ is considered with respect to $L^1$-distance \cite[Definition~2.3]{N17}.

\begin{lem} If $\hat{\nu}$ is H\"older continuous on $\cE_0'(\Om)$, then it is locally H\"older continuous on $\cS_0(\Om)$.
\end{lem}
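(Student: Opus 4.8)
The plan is to reduce the statement about $\cS_0(\Om)$ to the statement about $\cE_0'(\Om)$ by rescaling. The point is that an arbitrary $v\in\cS_0(\Om)$ need not lie in $\cE_0'(\Om)$ for two reasons: its total Monge–Amp\`ere mass $\int_\Om(dd^cv)^n$ may be large (though it is controlled by the Chern–Levine–Nirenberg inequality on a slightly smaller domain), and it need not vanish on $\d\Om$. Both defects are repaired by a standard gluing-and-scaling construction: given $\vphi,\psi\in\cS_0(\Om)$ one first works on an intermediate domain $\Om'\subset\subset\Om''\subset\subset\Om$, replaces $\vphi$ on $\Om''$ by $\max\{\vphi-\veps,\, L\rho\}$ (and equal to $L\rho$ outside $\Om''$), where $\rho$ is a negative exhaustion function for $\Om$, $L>0$ is a large constant, and $\veps>0$ is small, so that the glued function is $\om$-psh on all of $\Om$, vanishes on $\d\Om$, agrees with $\vphi-\veps$ on $\Om'$, and has Monge–Amp\`ere mass bounded by a constant $M=M(\Om',\Om'',\Om)$ (using CLN for the inner piece and the explicit smooth piece outside). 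Then $\tilde\vphi:=M^{-1}\cdot(\text{glued function})$ lies in $\cE_0'(\Om)$, and likewise $\tilde\psi$.

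The key steps, in order, are: (1) fix the auxiliary data $\Om',\Om'',\rho,L,\veps$ and verify the gluing is $\om$-psh with the claimed boundary behavior — this is the usual observation that $\max$ of a psh function and a smooth psh function, taken across a region where the smooth one dominates, is psh; (2) bound $\int_\Om(dd^c(\text{glued }\vphi))^n$ by $M$, splitting the integral over $\Om''$ (where CLN gives $\int_{\Om''}(dd^c\vphi)^n\le C(\Om'',\Om)$ since $-1\le\vphi\le0$ on $\Om$) and over $\Om\setminus\Om''$ (where the function is $L\rho$, a fixed smooth potential); (3) apply the hypothesized H\"older continuity of $\hat\nu$ on $\cE_0'(\Om)$ to $\tilde\vphi,\tilde\psi$ to get $\int_{\Om}|\tilde\vphi-\tilde\psi|\,d\nu\le C\|\tilde\vphi-\tilde\psi\|_{L^1(\Om)}^\al$; (4) unravel the scaling: on $\Om'$ we have $|\tilde\vphi-\tilde\psi|=M^{-1}|\vphi-\psi|$, so the left side controls $M^{-1}\int_{\Om'}|\vphi-\psi|\,d\nu$, while $\|\tilde\vphi-\tilde\psi\|_{L^1(\Om)}\le M^{-1}\|\vphi-\psi\|_{L^1(\Om'')}+(\text{terms from the }\max\text{ and the }\veps\text{-shift})$. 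The shift by $\veps$ contributes a term comparable to $\veps\,\mathrm{Vol}(\Om)$ on the right; choosing $\veps$ as a suitable small power of $\|\vphi-\psi\|_{L^1(\Om)}$ (or, more simply, first handling the case where $\|\vphi-\psi\|_{L^1(\Om)}$ is bounded below, where the inequality is trivial, and otherwise taking $\veps\to0$) absorbs it, yielding $\int_{\Om'}|\vphi-\psi|\,d\nu\le C'\|\vphi-\psi\|_{L^1(\Om)}^{\al'}$ for a possibly smaller exponent $\al'$ and a constant $C'=C'(\Om',\Om)$.

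The main obstacle is step (4): controlling $\|\tilde\vphi-\tilde\psi\|_{L^1(\Om)}$ in terms of $\|\vphi-\psi\|_{L^1(\Om)}$ through the $\max$ operation and the $\veps$-shift without losing the H\"older exponent outright. The elementary inequality $\big|\max\{a,c\}-\max\{b,c\}\big|\le|a-b|$ takes care of the gluing region cleanly, so the genuine nuisance is the uniform $\veps$-translation, which forces either the two-case dichotomy above or a careful optimization of $\veps$; neither is deep, but it is the place where one must be careful that all constants depend only on $\Om',\Om$ (and the fixed exponent $\al$ from the hypothesis) and not on $\vphi,\psi$. Everything else — the CLN mass bound, the $\om$-psh gluing — is routine and parallels the construction already used in the proof of Lemma~\ref{lem:equiv-loc-glo}.
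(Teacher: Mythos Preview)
Your approach is exactly the paper's: glue with a multiple of the defining function via $\max$, invoke CLN for the mass bound, rescale into $\cE_0'(\Om)$, and apply the hypothesis. The only correction is that your ``main obstacle'' in step (4) is illusory. Since you shift \emph{both} $\vphi$ and $\psi$ by the \emph{same} $\veps$ and glue with the \emph{same} $L\rho$, the inequality $|\max\{a,c\}-\max\{b,c\}|\le|a-b|$ you already quote gives
\[
|\tilde\vphi-\tilde\psi| \;=\; M^{-1}\big|\max\{\vphi-\veps,L\rho\}-\max\{\psi-\veps,L\rho\}\big|\;\le\; M^{-1}|\vphi-\psi|
\]
pointwise on all of $\Om$, with equality on $\Om'$. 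There are no extra $\veps$-terms, no optimization is needed, and the H\"older exponent is preserved unchanged; the paper records exactly this as the one-line observation $|\tilde u-\tilde v|\le|u-v|$.
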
 

\begin{proof} Let $\Om' \subset \subset \Om$ and $u,v\in \cS_0(\Om)$. Let $\rho$ be the defining function of $\Om$. By the maximum construction we may assume that there are $\tilde u, \tilde v \in PSH(\Om)$ such that
$$	\tilde u = u, \tilde v =v \quad \text{in } \Om'
$$ 
and $\tilde u = \tilde v = \rho$ near $\d\Om$. The Chern-Levin-Nirenberg inequality implies that $\tilde u/c_0, \tilde v/c_0 \in \cE_0'(\Om)$ for a constant $c_0>0$ depending only on $\rho$ and $\Om',\Om$. Thus,
$$
	\int_{\Om'} |u-v| d\nu \leq \int_\Om |\tilde u- \tilde v| d\nu \leq  C c_0 \|\tilde u - \tilde v\|^\al_{L^1(\Om)} \leq  C c_0 \|u-v\|_{L^1(\Om)}^\al,
$$ 
where the last inequality used the fact that $|\tilde u -\tilde v| \leq |u-v|$ in $\Om$.
\end{proof}

Let us consider the following classes of measures:
$$
\cH(\tau) = \left\{ \mu \in \cF(X,h_1): h_1(x) = C_1 x^{n\tau} \text{ for some } C_1,\tau>0\right\},
$$
and the moderate measures, which by definition, are  in $\cF(X,h_2)$ with $h_2(x) = C_2 e^{\al x}$ for some $C_2,\al>0$.
For the latter the stability estimate of its potential has a nicer form, i.e., the function defined in \eqref{eq:Ga} is
$$	\Ga(s) = C s^\al \quad \text{with } \al>0.
$$
We observe that the proof of \cite[Proposition~4.4]{DN14} holds true for a general Hermitian metric $\om$. This gives a sufficient condition for moderate measures.

\begin{prop}\label{prop:dn-moderate} If $\hat{\mu}$ is H\"older continuous on $\{v\in PSH(\om): \sup_X v=0\}$, then it is moderate.
\end{prop}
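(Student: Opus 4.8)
The plan is to transplant the proof of \cite[Proposition~4.4]{DN14} to the Hermitian setting. The only analytic input it uses, besides the hypothesis, is the uniform exponential decay of the volume measure $\om^n$ along sublevel sets of $\om$-psh functions, which is available on Hermitian manifolds as well (see e.g. \cite{K05}, \cite{KN1}). First I would record the hypothesis quantitatively: there are constants $C_0>0$ and $\al_0>0$ such that
\[\notag
	\left|\int_X(u-v)\,d\mu\right|\le C_0\,\|u-v\|_{L^1}^{\al_0}
\]
for every $u,v\in PSH(\om)$ with $\sup_X u=\sup_X v=0$.

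The heart of the argument is a truncation trick. One uses the elementary fact that if $u\in PSH(\om)$ and $t\ge 1$, then $u/t\in PSH(\om)$, since $\om+dd^c(u/t)=(1-t^{-1})\om+t^{-1}(\om+dd^cu)\ge0$. Hence, for $u\in PSH(\om)$ with $\sup_X u=0$ and $t\ge 1$, the functions
\[\notag
	v_1:=\max\{u/t,-2\},\qquad v_2:=\max\{u/t,-1\}
\]
both belong to $\{w\in PSH(\om):\sup_X w=0\}$ and satisfy $0\le v_2-v_1\le 1$, with $v_2-v_1$ supported in $\{u<-t\}$ and identically equal to $1$ on $\{u<-2t\}$. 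Consequently
\[\notag
	\mu(\{u<-2t\})\le\int_X(v_2-v_1)\,d\mu\le C_0\,\|v_2-v_1\|_{L^1}^{\al_0}\le C_0\Big(\int_{\{u<-t\}}\om^n\Big)^{\al_0}.
\]
Inserting the classical uniform bound $\int_{\{u<-t\}}\om^n\le Ae^{-\al_2 t}$, valid for all $u\in PSH(\om)$ with $\sup_X u=0$, with $A,\al_2>0$ depending only on $(X,\om)$, produces constants $C_3,\al_3>0$ with
\[\notag
	\mu(\{u<-t\})\le C_3\,e^{-\al_3 t}\qquad\text{for all }u\in PSH(\om)\text{ with }\sup_X u=0\text{ and all }t>0.
\]
Integrating the layer-cake identity $e^{-\delta u}=1+\delta\int_0^\infty e^{\delta s}\mathbf{1}_{\{u<-s\}}\,ds$ against $\mu$ then gives, for every $0<\delta<\al_3$, a uniform bound $\sup\{\int_X e^{-\delta u}\,d\mu:u\in PSH(\om),\ \sup_X u=0\}<\infty$.

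Finally I would recognise this uniform exponential integrability as membership $\mu\in\cF(X,h_2)$ with $h_2(x)=C_2e^{\al x}$, i.e. the moderateness of $\mu$ in the sense of \cite{DNS}. This is the standard equivalence of the two descriptions of moderate measures; it passes through the (global) relative extremal function of a Borel set together with the Alexander--Taylor inequality, which allow one to bound $\mu(E)$ for an arbitrary Borel $E$ by $\mu$ of a deep sublevel set $\{u<-t\}$ with $t$ comparable to $cap_\om(E)^{-1/n}$. I expect this last translation to be the only point requiring care; the rest is routine.
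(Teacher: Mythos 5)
Your steps 1--3 are correct and are essentially the argument the paper invokes: the paper's proof is nothing more than the remark that the proof of \cite[Proposition~4.4]{DN14} goes through for a Hermitian metric, and that proof is exactly your truncation trick (comparing $\max\{u/t,-2\}$ and $\max\{u/t,-1\}$, or equivalently $\max\{u,-t\}$ and $\max\{u,-2t\}$) combined with the uniform exponential volume estimate for normalized $\om$-psh functions, which is indeed available on compact Hermitian manifolds (cf.\ \cite{DK12}). So up to the conclusion $\mu(\{u<-t\})\le C e^{-\al_3 t}$, uniformly over $u\in PSH(\om)$ with $\sup_X u=0$, your proposal matches the intended route.

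The genuine gap is your step 4. In this paper ``moderate'' means the capacity bound $\mu(E)\le F_{h_2}(cap_\om(E))$ with $h_2(x)=C_2e^{\al x}$, and to deduce it from the uniform sublevel-set decay you need, for every compact $K$, a \emph{global} $\om$-psh function with $\sup_X=0$ which is $\le -c\,cap_\om(K)^{-1/n}$ on $K$. This is the Alexander--Taylor comparison with the global (Siciak--Zakharyuta, not relative) extremal function; it is classical locally and on compact K\"ahler manifolds, but the K\"ahler proof uses that the Monge--Amp\`ere measure of the extremal function has fixed total mass concentrated on $K$, and on a Hermitian manifold total Monge--Amp\`ere masses are not invariant, so no such global comparison can be quoted as ``standard''. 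Nor can you manufacture the test function by gluing the local extremal function into a global $\om$-psh function via the max-construction of Lemma~\ref{lem:equiv-loc-glo}: that construction caps the depth of the glued function at a fixed constant, so it never reaches depth of order $cap_\om(K)^{-1/n}$. The repair is to reverse the order of localization: first transfer the H\"older continuity of $\hat\mu$ to each chart (the reverse direction of Lemma~\ref{lem:equiv-loc-glo}, which only uses H\"older continuity on $\cS$, hence is available under your hypothesis), then run your truncation argument inside $\Om_j$ against local psh functions normalized by $\sup_{\Om_j}v=0$ together with the local uniform exponential volume estimate, apply the \emph{local} Alexander--Taylor inequality to get $\mu(K)\le C\exp\bigl(-\al\, cap(K,\Om_j)^{-1/n}\bigr)$ for $K$ in the inner ball, and finally sum over the finite cover using the two-sided comparability of $cap_\om$ with the Bedford--Taylor capacity (Lemma~\ref{lem:restriction} and \cite[p.~52--53]{K05}). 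With that reordering your argument closes; as written, the passage from exponential integrability to the $\cF(X,h_2)$ bound is the step that would fail.
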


Another sufficient condition for a measure to be  moderate, due to Dinh, Nguyen and Sibony \cite{DNS}, is as follows.  

\begin{lem}\label{lem:holder-sub} If there exists a H\"older continuous $\om$-psh function $\vphi$ and a constant $C>0$ such that 
$
	\mu \leq C \om_\vphi^n,
$
then $\mu$ is moderate and  $\hat{\mu}$ is H\"older continuous on $\cS$.
\end{lem}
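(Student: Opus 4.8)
The claim to prove is Lemma \ref{lem:holder-sub}: if $\mu \le C\om_\vphi^n$ for a H\"older continuous $\om$-psh function $\vphi$, then $\mu$ is moderate and $\hat\mu$ is H\"older continuous on $\cS$.

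The plan is to reduce the global statement to the local one via Lemma \ref{lem:equiv-loc-glo}, and then invoke the Dinh-Nguyen-Sibony local theory together with the local H\"older continuity criteria established just above. First I would pass to a fixed coordinate ball $B(a,r)\subset\subset U=B(a,2r)$ in the cover \eqref{eq:cover}. Choosing a strictly plurisubharmonic $\rho$ on $U$ with $dd^c\rho\ge\om$, the function $\vphi+\rho$ is psh on $U$ and still H\"older continuous, and $\om_\vphi^n=(\om+dd^c\vphi)^n\le(dd^c(\vphi+\rho))^n$; hence, after shrinking to a slightly smaller ball and possibly multiplying $\rho$ by a constant, $\mu$ is bounded on $B(a,r)$ by the local Monge-Amp\`ere measure of a H\"older continuous psh function $\Phi:=\vphi+\rho$. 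Now by \cite{DNS} (the first assertion of their main result, valid locally in any $\bC^n$), the local measure $(dd^c\Phi)^n$ is moderate on the ball, i.e.\ it lies in $\cF(B(a,r),h_2)$ with $h_2$ of exponential type; summing over the finite cover and using Lemma \ref{lem:restriction} in reverse (or rather the covering argument already used in the proof of Lemma \ref{lem:app-seq}) gives $\mu\in\cF(X,h_2)$, so $\mu$ is moderate.

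For the H\"older continuity of $\hat\mu$ on $\cS$, by Lemma \ref{lem:equiv-loc-glo} it suffices to check local H\"older continuity on each coordinate chart, and by the lemma immediately preceding this one it is enough to show that the local functional $\hat\nu$ associated to $\nu:=(dd^c\Phi)^n$ on the ball is H\"older continuous on $\cE_0'(\Om)$. This is precisely the quantitative content of the Dinh-Nguyen-Sibony estimate: for a H\"older continuous psh $\Phi$ one has $\int |v|\,(dd^c\Phi)^n\le C\|v\|_{L^1}^{\al}$ for $v$ ranging over a bounded subset of psh functions vanishing on the boundary with bounded total mass — indeed this inequality is the heart of their moderateness proof, obtained by integrating the volume-capacity bound. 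Combining: $\int_{\Om'}|u-v|\,d\mu\le\int_{\Om'}|u-v|\,d\nu\le C\|u-v\|_{L^1(\Om)}^{\al}$, which is \eqref{eq:local-h}; Lemma \ref{lem:equiv-loc-glo} then upgrades this to \eqref{eq:global-h}.

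The main obstacle is bookkeeping rather than conceptual: one must be careful that the comparison $\mu\le C\om_\vphi^n$ genuinely descends to the local charts with the metric term absorbed (the inequality $\om_\vphi^n\le(dd^c(\vphi+\rho))^n$ requires $dd^c\rho\ge\om$ and positivity of the forms, which is standard), and that the H\"older exponent and constants coming out of \cite{DNS} on each of the finitely many balls can be taken uniform — which they can, since the cover is finite and $\vphi$ has a single global H\"older modulus. No new analysis beyond citing \cite{DNS} and the two preceding lemmas is needed; the proof is essentially a two-line application once the reductions are in place.
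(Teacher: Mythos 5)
Your proposal is correct and follows essentially the same route as the paper: localize to a coordinate chart (the paper invokes \cite[Lemma~1.2]{KN4} and Lemma~\ref{lem:equiv-loc-glo} for this reduction, and your addition of $\rho$ with $dd^c\rho\ge\om$ is exactly how one absorbs the metric term to get domination by $(dd^c\Phi)^n$ for a H\"older continuous psh $\Phi$), then apply \cite[Corollary~1.2]{DNS} to conclude both moderateness and H\"older continuity of the local functional on $\cE_0'(\Om)$, and finally pass back to $\cS$ via the preceding lemma and Lemma~\ref{lem:equiv-loc-glo}. No substantive difference from the paper's argument.
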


\begin{proof}  These properties are local by  \cite[Lemma~1.2]{KN4} and Lemma~\ref{lem:equiv-loc-glo}. Therefore, we only prove them in a local coordinate chart. Let $U:= B(x,r) \subset \Om:= B(x,2r)$. Then, we can assume $\mu$ is compactly supported in $U$ and $\mu \leq (dd^c\vphi)^n$ for some H\"older continuous plurisubharmonic function $\vphi$ in $\Om$. By \cite[Corollary~1.2]{DNS} (see also \cite[Lemma~2.7, Proposition~2.9]{N17}) we get  that $\mu$ is moderate and $\hat{\mu}$ is  H\"older continuous on $\cE_0'(\Om)$. Thus, it is also H\"older continuous on $\cS$.
\end{proof}

\begin{remark} If $\om$ is K\"ahler, then  under the assumption of the lemma  $\mu$ is indeed H\"older continuous on  $\{v\in PSH(\om): \sup_X v=0\}$. However, due to the torsion terms $dd^c \om$ and $d\om \wed d^c \om$ in the general Hermitian case, it seems the H\"older continuity only holds on the smaller set $\cS$.
\end{remark}

We are ready to prove the Dinh-Nguyen type characterization on Hermitian manifolds.

\begin{thm} \label{thm:dn-chac} A positive Radon measure $\mu$ belongs to $\cH(\tau)$ and $\hat{\mu}$ is H\"older continuous on $\cS$ if and only if  there exists a H\"older continuous $\om$-psh function $u$ and a constant $c>0$ such that 
\[
	(\om + dd^c u)^n = c \;\mu.
\]
\end{thm}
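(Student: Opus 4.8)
The plan is to prove the two implications separately, with the bulk of the work on the ``functional $\Rightarrow$ potential'' direction.

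\textbf{The easy direction.} Suppose there is a H\"older continuous $\om$-psh function $u$ with $(\om+dd^cu)^n = c\,\mu$. Then $\mu \leq c^{-1}\om_u^n$, so Lemma~\ref{lem:holder-sub} immediately gives that $\mu$ is moderate and $\hat\mu$ is H\"older continuous on $\cS$. It remains to see that a moderate measure lies in $\cH(\tau)$ for every $\tau>0$: indeed if $\mu(E)\leq C_2\,e^{\al\, cap_\om(E)}$ then, since $cap_\om$ is bounded on $X$, one has $e^{\al s}\leq C(\tau)\,s^{n\tau}$ for $s$ in the bounded range of the capacity and any fixed $\tau>0$, hence $\mu\in\cF(X,h_1)$ with $h_1(x)=C_1x^{n\tau}$. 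So $\mu\in\cH(\tau)$ and $\hat\mu$ is H\"older continuous on $\cS$.

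\textbf{The hard direction.} Assume $\mu\in\cH(\tau)$ and $\hat\mu$ is H\"older continuous on $\cS$. Since $\mu\in\cH(\tau)\subset\cF(X,h)$ with $\mu(X)>0$ (if $\mu\equiv 0$ there is nothing to prove), Theorem~\ref{thm:main} produces a \emph{continuous} $\om$-psh function $u$ and $c>0$ with $(\om+dd^cu)^n = c\,\mu$. The point is to upgrade continuity to H\"older continuity. As noted in the introduction (following \cite{KN4}, inspired by \cite{DDGHKZ}), H\"older regularity is a local matter, so via Lemma~\ref{lem:equiv-loc-glo} and the localization of $\mu$ through the partition of unity $\chi_j$ (Lemma~\ref{lem:app-seq}) and a strictly psh change $\vphi=u+\rho$, it suffices to show: in a ball $U\subset\subset\Om$, a bounded psh function $\vphi$ with $(dd^c\vphi)^n$ locally dominated by a measure $\nu$ whose functional $\hat\nu$ is locally H\"older continuous on $\cS_0(\Om)$ and which satisfies the $\cH(\tau)$-type bound, is H\"older continuous. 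The mechanism is a self-improving estimate: using Proposition~\ref{prop:l1-stability} (with the explicit $\Ga$, which for the $\cH(\tau)$ class is a power function $\Ga(s)=Cs^\al$) one controls $\sup_X(\psi-\vphi)$ by a power of $\|(\psi-\vphi)_+\|_{L^1(d\mu)}$; testing against translates/regularizations $\psi=\vphi_\de$ of $\vphi$ and bounding $\|(\vphi_\de-\vphi)_+\|_{L^1(d\mu)}$ by the H\"older continuity of $\hat\mu$ on $\cS$ applied to the normalized pair $(\vphi_\de,\vphi)$, one gets $\om_X(\vphi,\de)\leq C\de^{\al'}$ for the modulus of continuity, i.e.\ $\vphi\in C^{\al'}$.

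\textbf{Main obstacle.} The delicate step is the last one: turning the $L^1(d\mu)$-stability estimate into a genuine H\"older modulus of continuity. One must choose the comparison function $\psi$ — typically a sup-convolution or a $\de$-regularization of $\vphi$ restricted to the chart, suitably cut off and renormalized to land in $\cS$ — and estimate $\|(\psi-\vphi)_+\|_{L^1(d\mu)}$: here the hypothesis that $\hat\mu$ is H\"older continuous on $\cS$ is exactly what bounds $\int_X(\psi-\vphi)_+\,d\mu$ by $C\|\psi-\vphi\|_{L^1(\om^n)}^\al\leq C\de^{\al}$, while $\cH(\tau)$ guarantees the stability exponent in $\Ga$ is positive and uniform. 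Combining, $\sup_X(\psi-\vphi)\leq C\de^{\al''}$; since $\psi$ dominates the $\de$-shift of $\vphi$ up to controlled error, this yields the one-sided, hence (for psh functions, by a standard argument) two-sided H\"older bound. The careful bookkeeping of the cutoff constants (as in the proof of Lemma~\ref{lem:equiv-loc-glo}) and the verification that the regularized competitor genuinely lies in $\cS_0$ after rescaling are where the technicalities concentrate, but no new idea beyond \cite{DN14}, \cite{KN4} and the tools already assembled in the preceding sections is needed.
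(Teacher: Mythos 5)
Your proposal follows essentially the same route as the paper: the ``potential $\Rightarrow$ functional'' direction via Lemma~\ref{lem:holder-sub}, and the converse by producing a continuous solution with Theorem~\ref{thm:main} and upgrading it to H\"older continuity through the stability estimate (Proposition~\ref{prop:l1-stability}, whose $\Ga$ is a power function for $\cH(\tau)$) applied to a regularization of $u$, with $\|(\psi-u)_+\|_{L^1(d\mu)}$ controlled by the H\"older continuity of $\hat\mu$ on $\cS$ together with the $L^1$-estimate for regularizations from \cite{K08}; this is exactly the \cite{KN4} machinery the paper invokes.

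One correction in your easy direction: ``moderate'' means $\mu \in \cF(X,h_2)$ with $h_2(x)=C_2e^{\al x}$, i.e.\ $\mu(E)\leq F_{h_2}(cap_\om(E))$ with $F_{h_2}(x)=x\,e^{-\al x^{-1/n}}/C_2$, not $\mu(E)\leq C_2 e^{\al\, cap_\om(E)}$ (the latter is vacuous for finite measures). Consequently your inequality $e^{\al s}\leq C(\tau)s^{n\tau}$ is false as $s\to 0$, which is precisely the regime where capacity domination matters. The containment ``moderate $\subset\cH(\tau)$'' is nevertheless correct: one needs $F_{h_2}(x)\leq F_{h_1}(x)$, i.e.\ $C_1/C_2\leq x^{\tau}e^{\al x^{-1/n}}$ on $(0,cap_\om(X)]$, which holds for a suitable $C_1$ because the right-hand side is positive, continuous, and tends to $+\infty$ as $x\to 0^{+}$.
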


\begin{proof}  The second condition implies the first by Lemma~\ref{lem:holder-sub}. It remains to show the reverse direction.
Theorem~\ref{thm:main} gives a continuous $\om$-psh function $u$ and a constant $c>0$  solving the equation. To show that the function $u$ is H\"older continuous we follow  the proof of \cite[Theorem~1.3]{KN4}. Note that we used the H\"older continuity of $\hat{\mu}$ on $\cS$  and \cite[Eq.(1.1)]{K08} to get the validity of \cite[Lemma~2.8]{KN4} in the present setting. 
\end{proof}

The last theorem allows to  extend   results of Pham \cite{hiep} and  Vu \cite{viet16} from the  K\"ahler to the Hermitian setting.

\begin{prop} Let $\mu$ be a positive Radon measure on $X$. Assume there exist constants $A,\al , t_0 >0$ such that for every  ball $B(x,t) \subset X$, 
$$
	\mu(B(x,t)) \leq A t^{2n-2+\al} \quad\text{for every } 0<t \leq t_0. 
$$ Suppose $0\leq f \in L^p(X,d\mu)$ with $p>1$. Assume that  $\int_X f d\mu>0$. Then, there exist a constant $c>0$ and a H\"older continuous $\om-$psh function solving
$$	(\om+ dd^c u)^n = c f d\mu.
$$
\end{prop}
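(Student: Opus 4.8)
The plan is to verify that the measure $\nu:=f\,d\mu$ meets the hypotheses of Theorem~\ref{thm:dn-chac} and then to invoke that theorem. Since $f\ge 0$ and $\mu$ is a finite positive Radon measure, $\nu$ is again a finite positive Radon measure, and $\nu(X)=\int_X f\,d\mu>0$ by assumption; it remains to show that $\nu\in\cH(\tau)$ for some $\tau>0$ and that $\hat\nu$ is H\"older continuous on $\cS$. Both properties are local: H\"older continuity of $\hat\nu$ on $\cS$ is equivalent, by Lemma~\ref{lem:equiv-loc-glo}, to local H\"older continuity of $\hat\nu$ on every coordinate chart; and membership in $\cH(\tau)$ will follow from chart-wise estimates together with the finite cover \eqref{eq:cover}, the subordinate partition of unity $\{\chi_j\}$, and the comparability of the two capacities $cap_\om$ and $cap(\cdot,\Om_j)$ for sets contained in $U_j\Subset\Om_j$ (see \cite[pp.~52--53]{K05}). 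Accordingly, I fix a chart $U=B(a,s)\Subset\Om=B(a,3s)\subset\bC^n$ and replace $\mu$ by $\chi\mu$ and $f$ by the $L^p(d\mu)$-function $\chi f$, where $\chi=\chi_a$, so that everything is compactly supported in $U$.

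The heart of the argument is a purely local estimate, which is exactly the content of the work of Pham \cite{hiep} and Vu \cite{viet16} (in the spirit of Dinh--Nguyen--Sibony \cite{DNS} and Dinh--Nguyen \cite{DN14}): under the growth hypothesis $\mu(B(x,t))\le A\,t^{2n-2+\al}$, for every $N>0$ there is a constant $C_N=C_N(n,\al,A,U,\Om)$ with
$$\mu(K)\ \le\ C_N\,\big(cap(K,\Om)\big)^{N}\qquad\text{for all compact }K\subset U,$$
and, moreover, $\widehat{\chi\mu}$ is H\"older continuous on $\cE_0'(\Om)$. The proofs there proceed by enclosing a compact set $K$ of small relative capacity inside a sublevel set $\{u_K^*<-\tfrac12\}$ of its relative extremal function, estimating the ``size'' of sublevel sets of plurisubharmonic functions in terms of their capacity through the Alexander--Taylor inequality, and covering such sets efficiently by balls to which the growth hypothesis applies; the \emph{strict} inequality $\al>0$ is what produces the gain over the borderline exponent $2n-2$ and allows $N$ to be taken arbitrarily large. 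All of this takes place inside a single coordinate chart and uses only local pluripotential theory in $\bC^n$, so it transfers verbatim to a chart of the Hermitian manifold $X$. By the lemma preceding Lemma~\ref{lem:holder-sub}, H\"older continuity of $\widehat{\chi\mu}$ on $\cE_0'(\Om)$ yields local H\"older continuity of $\widehat{\chi\mu}$ on $\cS_0(\Om)$.

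It now remains to pass from $\chi\mu$ to $\chi\nu=\chi f\,d\mu$ and to reassemble globally. Write $q=p/(p-1)$. By H\"older's inequality, for any Borel set $E$,
$$(\chi\nu)(E)=\int_E f\,d(\chi\mu)\ \le\ \Big(\int_E f^p\,d(\chi\mu)\Big)^{1/p}\big((\chi\mu)(E)\big)^{1/q}\ \le\ \|f\|_{L^p(d\mu)}\,\big((\chi\mu)(E)\big)^{1/q},$$
and, since $|u-v|\le 1$ and $q\ge 1$ for $u,v\in\cS_0(\Om)$,
$$\int_{U}|u-v|\,d(\chi\nu)\ \le\ \|f\|_{L^p(d\mu)}\Big(\int_{U}|u-v|^q\,d(\chi\mu)\Big)^{1/q}\ \le\ \|f\|_{L^p(d\mu)}\Big(\int_{U}|u-v|\,d(\chi\mu)\Big)^{1/q}.$$
Fix $N>q$. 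Combining the first inequality with the local capacity estimate gives $(\chi\nu)(K)\le \|f\|_{L^p(d\mu)}\,C_N^{1/q}\,(cap(K,\Om))^{N/q}$ for compact $K\subset U$; using $cap(K,\Om)\le c_0^{-1}cap_\om(K)$, summing $\nu=\sum_j\chi_j\nu$ over the finite cover, and invoking monotonicity and inner regularity of $cap_\om$, one obtains $\nu(E)\le C\,(cap_\om(E))^{N/q}$ for every Borel $E\subset X$; since $N/q>1$, this says exactly that $\nu\in\cF(X,h_1)$ with $h_1(x)=C_1x^{n\tau}$ and $\tau=N/q-1>0$, i.e. $\nu\in\cH(\tau)$. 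The second inequality, combined with the local H\"older continuity of $\widehat{\chi\mu}$ on $\cS_0(\Om)$ and then Lemma~\ref{lem:equiv-loc-glo}, shows that $\hat\nu$ is H\"older continuous on $\cS$. Theorem~\ref{thm:dn-chac} applied to $\nu$ then furnishes a H\"older continuous $\om$-psh function $u$ and a constant $c>0$ with $(\om+dd^cu)^n=c\,\nu=c\,f\,d\mu$, as required.

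The step I expect to be the main obstacle is the local estimate of the second paragraph: converting the metric growth condition $\mu(B(x,t))\le A\,t^{2n-2+\al}$ into a bound by the pluricomplex capacity, together with the H\"older continuity of $\widehat{\chi\mu}$ on $\cE_0'(\Om)$ --- this is where the relative extremal function, the Alexander--Taylor inequality and the covering argument do the real work. That step is, however, entirely local and already available from \cite{DNS,hiep,viet16}, so the only genuinely new ingredient here is the globalization --- previously carried out by K\"ahler-specific arguments --- which is now supplied by Theorem~\ref{thm:dn-chac} and the locality Lemma~\ref{lem:equiv-loc-glo}. The passage from $\mu$ to $\nu=f\,d\mu$ via H\"older's inequality, and the bookkeeping with the partition of unity, are routine.
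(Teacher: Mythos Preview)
Your proposal is correct and follows essentially the same approach as the paper's proof: reduce via Theorem~\ref{thm:dn-chac} to checking that $\nu=f\,d\mu$ lies in $\cH(\tau)$ and that $\hat\nu$ is H\"older continuous on $\cS$, localize, invoke the known local estimates coming from the growth condition on $\mu$, and pass from $\mu$ to $f\,d\mu$. The paper's proof is terser, citing \cite[Lemma~2.15, Corollary~2.14, Proposition~2.9]{N17} for the local H\"older continuity on $\cE_0'(\Om)$, the passage to $f\,d\mu$, and the moderate property in one breath, whereas you spell out the H\"older-inequality step and the partition-of-unity reassembly explicitly; but the underlying argument is the same.
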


\begin{proof} By Theorem~\ref{thm:dn-chac} it is sufficient to show that $fd\mu$ belongs to $\cH(\tau)$ for some $\tau>0$ and  that the corresponding functional is H\"older continuous on $\cS$. These properties are local. We may assume that $\supp \mu \subset U:= B(a,r) \subset \Om := B(a,2r)$ in $\bC^n$. By \cite[Lemma~2.15, Corollary~2.14]{N17} it follows that $\hat{\mu}$ is H\"older continuous on $\cE_0'(\Om)$, then so is the functional of $fd\mu$. Finally, by \cite[Propositon~2.9]{N17} we have  that $fd\mu$ is moderate. 
\end{proof}

One example of measures satisfying the assumption of the proposition above is given by the smooth volume form of a smooth hypersurface as in Pham \cite{hiep}. 

\begin{cor} Let $S$ be a compact smooth real hypersurface in $X$ and $dV_S$ is its smooth volume form. Then, for every $0\leq f \in L^p(S, dV_S)$ with $p>1$ and $\int_S f dV_S >0$, there exist a constant $c>0$ and a H\"older continuous $\om-$psh function $u$ solving 
$$	(\om + dd^c u)^n = c  f dV_S.
$$
\end{cor}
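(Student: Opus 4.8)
The plan is to reduce the statement for a hypersurface to the preceding proposition, whose hypothesis is a Euclidean volume growth bound $\mu(B(x,t)) \le A t^{2n-2+\alpha}$ together with an $L^p$ density condition. First I would fix a finite covering of $S$ by coordinate balls $B(x_j, r_j) \subset X$, chosen small enough that in each chart $S$ is a graph of a smooth function over a linear hyperplane $\bR^{2n-1} \subset \bC^n = \bR^{2n}$; since $S$ is compact and smooth, such a covering exists with uniformly bounded geometry. In each chart, the smooth volume form $dV_S$ of $S$ is comparable to the $(2n-1)$-dimensional Hausdorff (Lebesgue) measure on the graph, with comparison constants depending only on the $C^1$-norm of the local parametrization and hence uniformly bounded over the covering. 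This localizes the problem: it suffices to verify the measure-growth bound and the $L^p$ condition in one chart, since both $\cH(\tau)$ membership and H\"older continuity of $\hat{\mu}$ on $\cS$ are local properties by Lemma~\ref{lem:equiv-loc-glo} and Lemma~\ref{lem:holder-sub} (and the proposition is invoked via Theorem~\ref{thm:dn-chac}).

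The key estimate is then the elementary geometric fact that for a $C^1$ hypersurface graph, the intersection $B(x,t) \cap S$ has $(2n-1)$-dimensional volume bounded by $C t^{2n-1}$, with $C$ depending only on the Lipschitz constant of the graph. This gives $dV_S(B(x,t)) \le C t^{2n-1} = C t^{2n-2 + 1}$, so the hypothesis of the proposition holds with $\alpha = 1$, $A = C$, and any $t_0$ up to the size of the chart (for $t$ larger than the chart size the bound is trivial since $dV_S(S) < \infty$). With $0 \le f \in L^p(S, dV_S)$, $p > 1$, and $\int_S f\, dV_S > 0$ given by hypothesis, all the assumptions of the proposition are met, so it produces a constant $c > 0$ and a H\"older continuous $\om$-psh function $u$ solving $(\om + dd^c u)^n = c\, f\, dV_S$. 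This is exactly the assertion of the corollary.

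I expect the main (though still routine) obstacle to be the careful verification that the smooth volume form $dV_S$ genuinely satisfies the Euclidean growth bound $dV_S(B(x,t)) \le A t^{2n-2+\alpha}$ uniformly in the chosen covering — in particular, that the comparison between $dV_S$ and $(2n-1)$-Hausdorff measure, and the slicing estimate for $B(x,t) \cap S$, can be made with constants independent of the center $x \in S$. Once one observes that $S$ compact and smooth forces uniform bounds on the local graph parametrizations, this is immediate; the substantive analytic content has already been absorbed into Theorem~\ref{thm:dn-chac} and the proposition that precedes the corollary. Thus the proof is essentially a one-line appeal to the preceding proposition, after recording that a smooth compact hypersurface has Ahlfors-regular volume of exponent $2n-1$.

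\begin{proof} Cover $S$ by finitely many coordinate balls from \eqref{eq:cover} in which $S$ is the graph of a smooth function over a real hyperplane in $\bC^n = \bR^{2n}$; by compactness and smoothness of $S$ the $C^1$-norms of these parametrizations are uniformly bounded. Hence in each chart $dV_S$ is comparable, with uniform constants, to the $(2n-1)$-dimensional Lebesgue measure on the graph, and for every ball $B(x,t) \subset X$ one has $dV_S(B(x,t)) \le A\, t^{2n-1}$ for all $0 < t \le t_0$, where $A$ and $t_0 > 0$ depend only on $S, X, \om$. Thus $dV_S$ satisfies the growth hypothesis of the preceding proposition with $\al = 1$. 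Given $0 \le f \in L^p(S, dV_S)$, $p>1$, with $\int_S f\, dV_S > 0$, the proposition yields a constant $c > 0$ and a H\"older continuous $\om$-psh function $u$ with $(\om + dd^c u)^n = c\, f\, dV_S$.
\end{proof}
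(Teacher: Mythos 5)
Your proposal is correct and follows exactly the route the paper intends: the corollary is presented as an immediate instance of the preceding proposition, the only point to check being that a compact smooth real hypersurface satisfies $dV_S(B(x,t))\le A\,t^{2n-1}=A\,t^{2n-2+\alpha}$ with $\alpha=1$, which your local-graph argument establishes with uniform constants. Nothing is missing.
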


Later on, Vu \cite{viet16} proved the result for a  generic CR immersed $C^3-$submanifold of $X$. The K\"ahler assumption in his paper is needed only to use the characterization of \cite{DN14}. Given our results above we get immediately the statement of his result in the Hermitian setting. Actually, we can also simplify a bit his arguments by using the local H\"older continuity criterion (Lemma~\ref{lem:equiv-loc-glo}).

\bigskip
\bigskip

\end{document}